\author{Masood Aryapoor}
\address[Masood Aryapoor]{Division of Mathematics and Physics, M\"alar\-dalen  University,  Box 325, SE-631 05 Eskilstuna, Sweden}
\email{masood.aryapoor@mdu.se}
\author{Per B\"ack}
\address[Per B\"ack]{Division of Mathematics and Physics, M\"alar\-dalen  University,  Box  883,  SE-721  23  V\"aster\r{a}s, Sweden}
\email[corresponding author]{per.back@mdu.se}
\date\today
\newtheorem{corollary}{Corollary}
\newtheorem{lemma}{Lemma}
\newtheorem{proposition}{Proposition}
\newtheorem{theorem}{Theorem}
\theoremstyle{definition}
\newtheorem{definition}{Definition}
\newtheorem{example}{Example}
\theoremstyle{remark}
\newtheorem{remark}{Remark}
\DeclareMathOperator{\Cay}{Cay}
\DeclareMathOperator{\id}{id}
\DeclareMathOperator{\im}{im}
\DeclareMathOperator{\fl}{f\mkern 1mu l}
\begin{document}
\subjclass[2020]{16W10, 17A20, 17A35, 17A70, 17A75, 17D05}
\keywords{Cayley--Dickson algebra, Cayley--Dickson construction, Cayley double, flipped non-associative Ore extensions, flipped non-associative polynomial rings, flipped non-associative skew polynomial rings}

\begin{abstract}We introduce and study flipped non-associative polynomial rings. In particular, we show that all Cayley--Dickson algebras naturally appear as quotients of a certain type of such rings; this extends the classical construction of the complex numbers (and quaternions) as a quotient of a (skew) polynomial ring to the octonions, and beyond. We also extend some classical results on algebraic properties of Cayley--Dickson algebras by McCrimmon to a class of flipped non-associative polynomial rings. 
\end{abstract}

\title[Flipped non-associative polynomial rings]{Flipped non-associative polynomial rings and the Cayley--Dickson construction}

\maketitle

\section{Introduction}
The Cayley--Dickson construction, introduced by Dickson \cite{Dic19} based on previous work by Cayley \cite{Cay45} and then generalized by Albert \cite{Alb42}, is a famous construction for generating new $*$-algebras out of old ones. It is perhaps best known for generating all the real, normed division algebras: the real numbers ($\mathbb{R}$), the complex numbers ($\mathbb{C}$), the quaternions ($\mathbb{H}$), and the octonions ($\mathbb{O}$) \cite{Alb47, UW60, Wri53}. However, the construction is undoubtedly quite mysterious; a first (second and third\ldots) encounter with its strange product leaves one rather puzzled and with a feeling that it is a patchwork created ad hoc to make certain algebras, like the above, fit in a construction. In this article, we try to shed new light on the Cayley--Dickson construction with the purpose of illuminating the underlying patchwork. To this end, we introduce and study the notion of \emph{flipped non-associative polynomial rings}. In particular, we show that all Cayley--Dickson algebras naturally appear as quotients of a certain type of such rings (\autoref{thm:cayley-dickson-isomorphism}). By using this new class of polynomial rings, we may in particular extend the classical construction of $\mathbb{C}$ (and $\mathbb{H}$) as a quotient of a (skew) polynomial ring to $\mathbb{O}$, and beyond (see \autoref{ex:classical-Cayley-Dickson-quotient}).

The flipped non-associative polynomial rings that give rise to the Cayley--Dickson algebras are ``flipped'' versions of a type of non-associative and non-commutative polynomial rings known as \emph{non-associative Ore extensions}, introduced by Nystedt, Öinert, and Richter \cite{NOR18}. Non-associative Ore extensions are in turn non-associative generalizations of \emph{Ore extensions}, the latter introduced by Ore \cite{Ore33} under the name ``non-commutative polynomial rings''. Since their introduction, Ore extensions have been studied quite extensively (see e.g. \cite{GW04, Lam01, MR01} for comprehensive introductions). 

We study the underlying non-associative and non-commutative ring structure of flipped non-associative Ore extensions and discover, rather surprisingly, that the flipped non-associative Ore extensions that give rise to the Cayley--Dickson algebras are themselves Cayley--Dickson algebras of a certain kind (\autoref{thm:cayley-dickson-isomorphism1}). With this discovery, we may rather easily prove results regarding their algebraic properties (\autoref{thm:Cayley-Dickson-properties} and \autoref{thm:cayley-dickson-centers}). These results are in turn natural counterparts of some classical results on Cayley--Dickson algebras obtained by McCrimmon \cite[Theorem 6.8 (i)--(v), (viii)--(xii)]{McC85}.

The article is organized as follows:

In \autoref{sec:preliminaries}, we provide conventions and preliminaries from non-associative ring theory (\autoref{subsec:non-assoc-ring-theory}). We also recall what Ore extensions are (\autoref{subsec:ore-extensions}) and how the definition of these rings can naturally be extended to the non-associative setting (\autoref{subsec:non-assoc-ore}).

In \autoref{sec:flipped-non-associative-ore-extensions}, we introduce the concept of flipped non-associative polynomial rings (\autoref{def:flipped-ring}) and flipped non-associative Ore extensions (\autoref{def:flip-ore}). We then prove results regarding their underlying non-associative and non-commutative ring structure.

In \autoref{sec:cayley-dickson}, we give a brief introduction to $*$-algebras and the Cayley--Dickson construction. We then show that all Cayley--Dickson algebras naturally appear as quotients of certain flipped non-associative Ore extensions (\autoref{thm:cayley-dickson-isomorphism}), and that these flipped non-associative Ore extensions are in turn Cayley--Dickson algebras of a particular type (\autoref{thm:cayley-dickson-isomorphism1}). We conclude the article by proving results regarding algebraic properties of the aforementioned flipped non-associative Ore extensions (\autoref{thm:Cayley-Dickson-properties} and \autoref{thm:cayley-dickson-centers}). 

\section{Preliminaries}\label{sec:preliminaries}
\subsection{Non-associative ring theory}\label{subsec:non-assoc-ring-theory}We denote by $\mathbb{N}$ the set of natural numbers. All rings and algebras are assumed to be unital, unless stated otherwise, and the multiplicative identity element is written $1$. Any endomorphism is assumed to respect $1$.  By a \emph{non-associative ring}, we mean a ring which is not necessarily associative. If $R$ is a non-associative ring, by a \emph{left $R$-module}, we mean an additive group $M$ equipped with a biadditive map $R\times M\to M$, $(r,m)\mapsto rm$ for any $r\in R$ and $m\in M$. A subset $B$ of $M$ is a basis if any $m\in M$ can be uniquely written as a sum $m=\sum_{b\in B}r_bb$ with finite support, where $r_b\in R$. A left $R$-module that has a basis is called \emph{free}.

We also recall the following standard concepts and notations from non-associative ring theory (see e.g. \cite[Section 2]{NOR18}). Let $R$ be a non-associative ring. For  $r,s,t\in R$, we set $[r,s]\colonequals rs-sr$ and $(r,s,t)\colonequals (rs)t-r(st)$. We define the \emph{left}, \emph{middle}, and \emph{right nucleus} of $R$ as follows: $N_l(R)\colonequals\{r\in R\colon (r,s,t)=0 \text{ for all } s,t\in R\}$, $N_m(R)\colonequals\{s\in R\colon (r,s,t)=0 \text{ for all } r,t\in R\}$, and $N_r(R)\colonequals\{t\in R\colon (r,s,t)=0 \text{ for all } r,s\in R\}$. It turns out that $N_l(R)$, $N_m(R)$, and $N_r(R)$ are all associative subrings of $R$. We also define the \emph{nucleus} of $R$, denoted by $N(R)$, as the set $N_l(R)\cap N_m(R)\cap N_r(R)$. The \emph{center} of $R$, $Z(R)$, is defined as  $C(R)\cap N(R)$, where $C(R)\colonequals\{r\in R\colon [r,s]=0 \text{ for all } s\in R\}$ is the \emph{commuter} of $R$. In particular, $Z(R)$ is an associative and commutative subring of $R$. 

A non-associative ring $D$ is called a \emph{division ring} if the equations $r=qs$ and $r=sq'$, where $r,s\in D$ with $s\neq0$, have unique solutions $q,q'\in R$. Since we require our rings to be unital, for each non-zero $s\in D$, there must be unique $q,q'\in D$ such that $1=qs$ and $1=sq'$ hold ($q$ need not equal $q'$, however). Recall that over the real numbers, there are precisely four normed division algebras: $\mathbb{R}$, $\mathbb{C}$, $\mathbb{H}$, and $\mathbb{O}$. In particular, $\mathbb{H}$ is not commutative and $\mathbb{O}$ is neither commutative, nor associative.

\subsection{Ore extensions}\label{subsec:ore-extensions}
We begin by recalling the definition of Ore extensions as given in \cite[Definition 1.1]{NOR18}.

\begin{definition}[Ore extension]
Let $S$ be a ring, and let $R$ be an associative subring of $S$ containing the multiplicative identity element $1$ of $S$. Then $S$ is called an \emph{Ore extension of $R$} if there is an element $x\in S$ such that the following axioms hold:
\begin{enumerate}[label=\upshape(O\arabic*)]
\item $S$ is associative;\label{it:o3}
\item $S$ is a free left $R$-module with basis $\{1,x,x^2,\ldots\}$;\label{it:o1}
\item $xR\subseteq Rx+R$.\label{it:o2}
\end{enumerate}
\end{definition}
 As usual, we use the convention that $x^0\colonequals 1$. If \ref{it:o2} is replaced by
\begin{enumerate}[label=\upshape(O\arabic*)$'$]\addtocounter{enumi}{2}
	\item $xR\subseteq Rx$;
\end{enumerate}
then $S$ is called a \emph{skew polynomial ring}.

If $R$ is a non-associative ring with an endomorphism $\sigma$, then an additive map $\delta$ on $R$ is called a \emph{left $\sigma$-derivation} if for all $r,s\in R$,
\[
\delta(rs)=\sigma(r)\delta(s)+\delta(r)s.	
\]
Similarly, an additive map $\delta'$ on $R$ is called a \emph{right $\sigma$-derivation} if for all $r,s\in R$,
\[
\delta'(rs)=\delta'(r)\sigma(s)+r\delta'(s).	
\]

 An \emph{ordinary generalized polynomial ring} $R[X;\sigma,\delta]$ (see e.g. \cite[Section 1]{NOR18}) where $\sigma$ is an endomorphism and $\delta$ is a left $\sigma$-derivation on $R$ consists of left polynomials in $X$ with coefficients in $R$. The addition in this ring is the ordinary addition of polynomials, and the multiplication is defined by the biadditive extension of the relations
\begin{equation}
(rX^m)(sX^n)=\sum_{i\in\mathbb{N}}(r\pi_i^m(s))X^{i+n},\label{eq:prod-ore}	
\end{equation}
where the functions $\pi_i^m\colon R\to R$, called \emph{$\pi$ functions}, are defined as the sum of all $\binom{m}{i}$ compositions of $\sigma$ and $\delta$ in which $\sigma $ occurs $i$ times and  $\delta$ occurs $m-i$ times. For instance, $\pi_2^3=\sigma\circ\sigma\circ\delta + \sigma\circ\delta\circ\sigma + \delta\circ\sigma\circ\sigma$, while $\pi_0^0$ is defined as $\id_R$. Whenever $i>m$ or $i<0$, we set $\pi_i^m=0$. It is known that $R[X;\sigma,\delta]$ is an Ore extension of $R$ with $x=X$ (for a proof, see e.g. \cite[Proposition 3.2]{NOR18}). If $\delta=0$, then $R[X;\sigma,\delta]$ is a skew polynomial ring, and the product \eqref{eq:prod-ore} is 
 \begin{equation}
 	(rX^m)(sX^n)=(r\sigma^m(s))X^{m+n}.\label{eq:skew-multiplication}
 \end{equation}
 If $\sigma=\id_R$ and $\delta=0$, then $R[X;\sigma,\delta]$ is the ordinary polynomial ring $R[X]$. The above construction moreover gives us all Ore extensions of $R$ in the sense that any Ore extension of $R$ is isomorphic to an ordinary generalized polynomial ring $R[X;\sigma,\delta]$ (for a proof, see e.g. \cite[Proposition 3.3]{NOR18}).

\subsection{Non-associative Ore extensions}\label{subsec:non-assoc-ore}
In \cite{NOR18}, the authors noted that the product \eqref{eq:prod-ore} equips the additive group $R[X;\sigma,\delta]$ of ordinary generalized polynomials over any  non-associative ring $R$ with a non-associative ring structure for any two additive maps $\sigma$ and $\delta$ on $R$ satisfying $\sigma(1)=1$ and $\delta(1)=0$. Hence, in order to define non-associative Ore extensions, they investigated how to adapt the axioms \ref{it:o3}, \ref{it:o1}, and \ref{it:o2} so that these rings would still correspond to the above generalized polynomial rings. They then suggested the following definition:

\begin{definition}[Non-associative Ore extension]\label{def:non-assoc-ore}
Let $S$ be a non-associative ring, and let $R$ be a subring of $S$ containing the multiplicative identity element $1$ of $S$. Then $S$ is called a \emph{non-associative Ore extension of $R$} if there is an element $x\in S$ such that the following axioms hold:
\begin{enumerate}[label=\upshape(N\arabic*)]
\item  $x\in N_r(S)\cap N_m(S)$;\label{it:no3}
\item $S$ is a free left $R$-module with basis $\{1,x,x^2,\ldots\}$;\label{it:no1}
\item $xR\subseteq Rx+R$.\label{it:no2}
\end{enumerate}
\end{definition}
If \ref{it:no2} is replaced by
\begin{enumerate}[label=\upshape(N\arabic*)$'$]\addtocounter{enumi}{2}
	\item $xR\subseteq Rx$;
\end{enumerate}
then $S$ is called a \emph{non-associative skew polynomial ring}.

\begin{remark}
Axiom \ref{it:no3} ensures that the element $x$ associates with itself, so that there is no ambiguity in writing $x^n$ for the $n$-fold product of $x$ with itself, for $n\in\mathbb{N}$. 
\end{remark}

Let $R$ be a non-associative ring. We denote by $R[X]$ the set of formal sums $\sum_{i\in \mathbb{N}}r_iX^i$ of finite support, where $r_i\in R$, equipped with pointwise addition. Now, let $\sigma$ and $\delta$ be additive maps on $R$ satisfying $\sigma(1)=1$ and $\delta(1)=0$. The \emph{generalized polynomial ring} $R[X;\sigma,\delta]$ over $R$ is defined as the additive group $R[X]$ with multiplication defined by \eqref{eq:prod-ore}. One readily verifies that this makes $R[X;\sigma,\delta]$ a non-associative ring.  

In \cite{NOR18}, the authors then showed that $R[X;\sigma,\delta]$ is indeed a non-associative Ore extension of $R$ with $x=X$ \cite[Proposition 3.2]{NOR18}, and that every non-associative Ore extension of $R$ is isomorphic to a generalized  polynomial ring over $R$ \cite[Proposition 3.3]{NOR18}. If $\delta=0$, then $R[X;\sigma,\delta]$ is a non-associative skew polynomial ring, and the multiplication is given by \eqref{eq:skew-multiplication}. If $\sigma=\id_R$ and $\delta=0$, then $R[X;\sigma,\delta]$ is the ordinary polynomial ring $R[X]$.

\section{Flipped non-associative Ore extensions}\label{sec:flipped-non-associative-ore-extensions}
We begin with a general discussion, introducing the notion of \emph{flipped non-as\-so\-cia\-tive polynomial rings}. Let $R$ be a non-associative ring. The additive group $R[X]$ of left polynomials $\sum_{i\in\mathbb{N}}r_iX^i$ with coefficients $r_i\in R$ can be equipped with a natural left $R$-module structure in the following fashion: for any $r\in R$, 
\[
    r\Big(\sum_{i\in\mathbb{N}} r_iX^i\Big)=\sum_{i\in\mathbb{N}} (rr_i)X^i.
\]
By a \emph{non-associative polynomial ring (in one indeterminate) over} $R$, we shall mean a multiplication on the left $R$-module $R[X]$ that turns $R[X]$ into a (not necessarily unital) non-associative ring and which is compatible with the $R$-module structure on $R[X]$ in the following sense:
\begin{equation}
(rX^m)(sX^n)=\sum_{i\in\mathbb{N}}(r\ell_{m,n;i}(s))X^i, \label{eq:prod-general}
\end{equation}
for any $r,s\in R$ and some additive maps ${\ell_{m,n;i}: R\to R}$ ($m,n,i\in\mathbb{N}$). It is easy to see that there is a 1-1 correspondence between the set of non-associative polynomial rings over $R$ and the set of all collections of such additive maps subject to the condition that for every $m,n\in \mathbb{N}$ and $s\in R$, there exist only finitely many $i\in\mathbb{N}$ such that $\ell_{m,n;i}(s)\neq 0$.

Let $\tau\colon R\times R\to R\times R$ be the \emph{flip map}, i.e.,  $\tau$ is defined by $\tau(r,s)=(s,r)$ for arbitrary $r,s\in R$. We may now use $\tau$ to ``flip'' the above multiplication depending on whether $n$ is even or odd to get a new non-associative ring structure on the same underlying additive group $R[X]$:

\begin{equation}
(rX^m)(sX^n)=\sum_{i\in\mathbb{N}}\tau_n(r,\ell_{m,n;i}(s))X^i. \label{eq:prod-flipped}
\end{equation}
Here, $\tau_n$ is defined as the composition of the multiplication on $R$ and $\tau^n$, so that 
\[
\tau_{n}(r,s)=\begin{cases}rs&\text{if } n \text{ is even,}\\
sr&\text{if } n \text{ is odd}.
\end{cases}
\]
We call the resulting rings \emph{flipped non-associative polynomial rings}:

\begin{definition}[Flipped non-associative polynomial ring]\label{def:flipped-ring} Let $S$ be a non-associative polynomial ring over a non-associative ring $R$, defined by a collection of additive maps $\ell_{m,n;i}$ ($m,n,k\in\mathbb{N}$) satisfying \eqref{eq:prod-general}. The \emph{flipped non-associative polynomial ring of $S$}, written $S^{\fl}$, is the additive group $S$ equipped with the multiplication defined by \eqref{eq:prod-flipped}.
\end{definition}

 We now apply the notion of flipped non-associative polynomial rings to the class of generalized polynomial rings introduced in \autoref{subsec:non-assoc-ore}.
 
\begin{definition}[Flipped generalized polynomial ring]\label{def:flipped-poly} The flipped non-associative polynomial ring $R[X;\sigma,\delta]^{\fl}$ of a generalized polynomial ring $R[X;\sigma,\delta]$ over a non-associative ring $R$ is called a \emph{flipped generalized polynomial ring over $R$}.
\end{definition}
 
Note that the multiplication in the flipped generalized polynomial ring $R[X;\sigma,\delta]^{\fl}$ satisfies the identity 
\begin{equation}
(rX^m)(sX^n)=\sum_{i\in\mathbb{N}}\tau_n(r,\pi_i^m(s))X^{i+n}, \label{eq:prod-flip}
\end{equation}
for arbitrary $r,s\in R$. Here, the $\pi$-functions $\pi_i^m$ on $R$ are defined just as for generalized polynomial rings. By identifying any $r\in R$ with $rX^0\in R[X;\sigma,\delta]^{\fl}$, we see that $R[X;\sigma,\delta]^{\fl}$ naturally contains $R$ as a subring, and that $R[X;\sigma,\delta]^{\fl}$ is a free left $R$-module.

The next example is perhaps the simplest example of a flipped non-associative polynomial ring.

\begin{example}
If $R$ is a non-associative ring, then the multiplication in the flipped generalized polynomial ring $R[X]^{\fl}=R[X;\id_R,0]^{\fl}$ satisfies the identity
\[
(rX^m)(sX^n)=\tau_n(r,s)X^{m+n}
\]
for any $m,n\in\mathbb{N}$ and $r,s\in R$. We note that $R[X]^{\fl}$ is commutative if and only if $R$ is commutative, and that $R[X]^{\fl}$ is associative if and only if $R$ is associative and commutative; in both cases, $R[X]^{\fl}=R[X]$.
\end{example}

Continuing, we note that we may split the $\pi$-functions of any (flipped) generalized polynomial ring into two disjoint parts: those that contain $\sigma$ innermost (outermost) and those that contain $\delta$ innermost (outermost). If we denote by $\delta_i^m$ the Kronecker delta,
\[
 \delta_i^m\colonequals\begin{cases}0&\text{if } i\neq m,\\ 1&\text{if } i=m,\end{cases}	
\]
then the following lemma is immediate:
 
\begin{lemma}\label{lem:pi-sigma-delta-commute}
 If $R$ is a non-associative ring with additive maps $\sigma$ and $\delta$ where $\sigma(1)=1$ and $\delta(1)=0$, then the following equalities hold for any $i,m,n\in\mathbb{N}$ and $r,s\in R$:
\begin{enumerate}[label=\upshape(\roman*)]
	\item $\pi_i^m(1)=\delta_i^m$;\label{it:kronecker}
	\item $\pi_i^{m+1}(s)=\pi_{i-1}^m(\sigma(s))+\pi_i^m(\delta(s)) = \sigma(\pi_{i-1}^m(s))+\delta(\pi_i^m(s))$;\label{it:pi-functions-split}	
	\item $\sigma(\tau_n(r,s))=\tau_n(\sigma(r),\sigma(s))$ if and only if $\sigma$ is an endomorphism;\label{it:sigma-tau-commute-endomorphism}
	\item $\sigma(\tau_n(r,s))=\tau_n(\sigma(s),\sigma(r))$ if and only if $\sigma$ is an anti-endomorphism.\label{it:sigma-tau-commute-anti-endomorphism}
\end{enumerate}	
\end{lemma}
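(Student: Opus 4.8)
The plan is to derive all four statements directly from the definition of $\pi_i^m$ as the sum of the $\binom{m}{i}$ compositions of $i$ copies of $\sigma$ and $m-i$ copies of $\delta$, together with the defining case split for $\tau_n$. For \ref{it:kronecker}, I would show that every summand of $\pi_i^m$ that involves at least one $\delta$ annihilates $1$. Indeed, in such a summand consider the \emph{innermost} occurrence of $\delta$; everything applied before it consists only of copies of $\sigma$, so by $\sigma(1)=1$ this inner block sends $1$ to $1$, after which $\delta(1)=0$ and the additivity of $\sigma$ and $\delta$ propagate $0$ outward. Hence $\pi_i^m(1)=0$ whenever $m-i\geq 1$, i.e. whenever $i<m$. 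When $i=m$ the unique summand is $\sigma^m$, and $\sigma^m(1)=1$; when $i>m$ we have $\pi_i^m=0$ by convention. Collecting the cases gives $\pi_i^m(1)=\delta_i^m$.

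For \ref{it:pi-functions-split}, the two displayed expressions arise from classifying the summands of $\pi_i^{m+1}$ by their \emph{innermost} factor and by their \emph{outermost} factor, respectively. A composition of $i$ copies of $\sigma$ and $(m+1)-i$ copies of $\delta$ either has $\sigma$ innermost, in which case the remaining $m$ factors range over exactly the summands of $\pi_{i-1}^m$, or has $\delta$ innermost, in which case they range over exactly the summands of $\pi_i^m$; summing yields $\pi_i^{m+1}=\pi_{i-1}^m\circ\sigma+\pi_i^m\circ\delta$, the first equality. Classifying by the outermost factor instead gives $\pi_i^{m+1}=\sigma\circ\pi_{i-1}^m+\delta\circ\pi_i^m$, the second. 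This Pascal-type bookkeeping is the step I expect to require the most care, since one must verify that each partition of the summands is both exhaustive and disjoint, that the index shift $i\mapsto i-1$ correctly records the loss of one $\sigma$, and that the boundary convention $\pi_{-1}^m=0$ makes the case $i=0$ consistent.

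For \ref{it:sigma-tau-commute-endomorphism} and \ref{it:sigma-tau-commute-anti-endomorphism}, I would simply split on the parity of $n$. When $n$ is even, $\tau_n(r,s)=rs$, so the identity in \ref{it:sigma-tau-commute-endomorphism} reads $\sigma(rs)=\sigma(r)\sigma(s)$ and that in \ref{it:sigma-tau-commute-anti-endomorphism} reads $\sigma(rs)=\sigma(s)\sigma(r)$; when $n$ is odd, $\tau_n(r,s)=sr$, and the two identities become $\sigma(sr)=\sigma(s)\sigma(r)$ and $\sigma(sr)=\sigma(r)\sigma(s)$, which are the previous conditions after interchanging $r$ and $s$. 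Thus, quantified over all $r,s\in R$, the identity in \ref{it:sigma-tau-commute-endomorphism} is equivalent to $\sigma$ being multiplicative, and that in \ref{it:sigma-tau-commute-anti-endomorphism} to $\sigma$ being anti-multiplicative; since $\sigma$ is already additive with $\sigma(1)=1$, these are precisely the conditions that $\sigma$ be an endomorphism, respectively an anti-endomorphism, which completes the argument.
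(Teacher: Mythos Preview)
Your argument is correct and unpacks exactly what the paper means when it declares the lemma ``immediate'': each item follows by reading off the definition of the $\pi$-functions (as sums of compositions of $\sigma$'s and $\delta$'s) and the parity case split defining $\tau_n$. The paper gives no proof beyond that remark, so your proposal is essentially a spelled-out version of the same approach.
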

Next we turn to the notion of \emph{flipped non-associative Ore extensions}. As in the case of non-associative Ore extensions (see \autoref{def:non-assoc-ore}), we wish to define the flipped ditto in such a way that they correspond to the above flipped generalized polynomial rings. We suggest the following definition:  
\begin{definition}[Flipped non-associative Ore extension]\label{def:flip-ore}Let $S$ be a non-associative ring, and let $R$ be a subring of $S$ containing the multiplicative identity element $1$ of $S$. Then $S$ is called a \emph{flipped non-associative Ore extension of $R$} if there is an element $x\in S$ such that the following axioms hold:
 \begin{enumerate}[label=\upshape(F\arabic*)]
  \item The following identities hold for any $m,n\in\mathbb{N}$ and $r,s\in R$:\\$(rx^{m+1})(sx^n)=((rx^m)(\sigma(s)x^n))x+(rx^m)(\delta(s)x^n)$, \quad
 $r(sx^n)=\tau_n(r,s)x^n$, where $\sigma$ and $\delta$ are additive maps such that $xr=\sigma(r)x+\delta(r)$ for all $r\in R$;\label{it:flip3}
 \item $S$ is a free left $R$-module with basis $\{1,x,x^2,\ldots\}$.\label{it:flip1}
 \end{enumerate}
 \end{definition}

\begin{remark}
Axiom \ref{it:flip3} implies that $xR\subseteq Rx + R$, and that the element $x$ is power associative, so that $x^n$ is well defined for any $n\in\mathbb{N}$. 
\end{remark}
  
 \begin{proposition}\label{prop:generalized-is-flip}
 Let $R$ be a non-associative ring with additive maps $\sigma$ and $\delta$ where $\sigma(1)=1$ and $\delta(1)=0$. Then $R[X;\sigma,\delta]^{\fl}$ is a flipped non-associative Ore extension of $R$ with $x=X$.
 \end{proposition}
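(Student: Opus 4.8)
The plan is to verify the three axioms \ref{it:flip1}, \ref{it:flip2}, and \ref{it:flip3} of \autoref{def:flip-ore} directly from the explicit multiplication \eqref{eq:prod-flip}, taking $x=X=1X^1$. Every computation reduces to \autoref{lem:pi-sigma-delta-commute} together with the parity behaviour of $\tau_n$; the only points requiring care are the well-definedness of the powers $x^n$ and the index matching in the first identity of \ref{it:flip3}.

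First I would pin down the powers of $x$. Writing $y_n\colonequals 1X^n$, the product \eqref{eq:prod-flip} together with \autoref{lem:pi-sigma-delta-commute}\ref{it:kronecker} gives
\[
y_my_n=\sum_{i\in\mathbb{N}}\tau_n(1,\pi_i^m(1))X^{i+n}=\tau_n(1,1)X^{m+n}=y_{m+n},
\]
since $\pi_i^m(1)=\delta_i^m$ and $\tau_n(1,1)=1$ regardless of the parity of $n$. Hence the $n$-fold product of $x=y_1$ is independent of the bracketing and equals $y_n=1X^n$, so $x$ is power associative and $x^n=1X^n$. In the same way one obtains the monomial identities $ax^n=aX^n$ and $r(ax^n)=\tau_n(r,a)x^n$ for all $a,r\in R$, using $\pi_i^0(a)=\delta_i^0a$ and $\tau_n(a,1)=a$; note that the second of these is precisely the second identity of \ref{it:flip3}.

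With $x^n=1X^n$ available, \ref{it:flip1} is immediate: the left $R$-action sends $(r,x^n)$ to $rx^n=rX^n$, so $R[X;\sigma,\delta]^{\fl}$ is the free left $R$-module on $\{1,x,x^2,\ldots\}$, which is the standard basis of the additive group $R[X]$. For \ref{it:flip2} I would compute
\[
xr=(1X^1)(rX^0)=\sum_{i\in\mathbb{N}}\pi_i^1(r)X^i=\sigma(r)x+\delta(r),
\]
which shows both that $xR\subseteq Rx+R$ and that the maps furnished by \ref{it:flip1}--\ref{it:flip2} coincide with the given $\sigma$ and $\delta$, so that the $\pi$-functions appearing in \eqref{eq:prod-flip} are the intended ones.

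The main step is the first identity of \ref{it:flip3}. Expanding its left-hand side by \eqref{eq:prod-flip}, splitting $\pi_i^{m+1}$ via \autoref{lem:pi-sigma-delta-commute}\ref{it:pi-functions-split}, and using biadditivity of $\tau_n(r,-)$ yields
\[
(rx^{m+1})(sx^n)=\sum_{i}\tau_n(r,\pi_{i-1}^m(\sigma(s)))X^{i+n}+\sum_{i}\tau_n(r,\pi_i^m(\delta(s)))X^{i+n}.
\]
The second sum is exactly $(rx^m)(\delta(s)x^n)$. For the first, I would exploit that right multiplication by $x$ raises every degree by one while fixing the coefficient, since $(cX^k)x=\tau_1(c,1)X^{k+1}=cX^{k+1}$ by \autoref{lem:pi-sigma-delta-commute}\ref{it:kronecker}; reindexing $i\mapsto i+1$ then identifies the first sum with $\bigl((rx^m)(\sigma(s)x^n)\bigr)x$. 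Adding the two contributions gives the required identity and completes the verification of \ref{it:flip3}, hence of the proposition. I expect this index-shifting identification --- matching a once-shifted $\pi$-sum with a right multiplication by $x$ --- to be the only genuinely non-routine point, everything else being forced by the Kronecker property \autoref{lem:pi-sigma-delta-commute}\ref{it:kronecker} and the parity of $\tau_n$.
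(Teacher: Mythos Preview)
Your proof is correct and follows essentially the same approach as the paper's: both verify \ref{it:flip2} via the direct computation $Xr=\sigma(r)X+\delta(r)$, both establish the key identity $(rX^m)X=rX^{m+1}$ from \autoref{lem:pi-sigma-delta-commute}\ref{it:kronecker}, and both prove the first identity of \ref{it:flip3} by expanding $(rX^{m+1})(sX^n)$, splitting $\pi_i^{m+1}$ via \autoref{lem:pi-sigma-delta-commute}\ref{it:pi-functions-split}, and matching the shifted sum with a right multiplication by $x$. Your treatment is slightly more explicit about power associativity (computing $y_my_n=y_{m+n}$ upfront to pin down $x^n=1X^n$), but this is a cosmetic difference rather than a genuinely different route.
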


\begin{proof}
That \ref{it:flip1} holds in $R[X;\sigma,\delta]^{\fl}$ is immediate from the previous discussion. From \eqref{eq:prod-flip}, $Xr=(1X)(rX^0)=\sum_{i\in\mathbb{N}}\tau_0(1,\pi_i^1(r))X^{i+0}=\sigma(r)X+\delta(r)$ for any $r\in R$. By using \ref{it:kronecker} in \autoref{lem:pi-sigma-delta-commute}, for any $m\in\mathbb{N}$ and $r\in R$,
 \begin{equation}
 	(rX^m)X\stackrel{\eqref{eq:prod-flip}}{=}\sum_{i\in\mathbb{N}}\tau_1(r,\pi_i^m(1))X^{i+1}=\sum_{i\in\mathbb{N}}(\pi_i^m(1)r)X^{i+1}\stackrel{\text{\ref{it:kronecker}}}{=}\sum_{i\in\mathbb{N}}(\delta_i^mr)X^{i+1}=rX^{m+1}.\label{eq:(rX^m)X-identity}
 \end{equation}
Now, with the help of \ref{it:pi-functions-split} in \autoref{lem:pi-sigma-delta-commute}, we can prove that \ref{it:flip3} holds:
\begin{align*}
(rX^{m+1})(sX^n)&\stackrel{\hspace{1pt}\eqref{eq:prod-flip}\hspace{1pt}}{=}\sum_{i\in\mathbb{N}}\tau_n(r,\pi_i^{m+1}(s))X^{i+n}\\
&\stackrel{\text{\ref{it:pi-functions-split}}}{=}\sum_{i\in\mathbb{N}}\tau_n(r,\pi_{i-1}^m(\sigma(s)))X^{i+n}+\sum_{i\in\mathbb{N}}\tau_n(r,\pi_i^m(\delta(s)))X^{i+n}\\
&\stackrel{\hspace{1pt}\phantom{\eqref{eq:prod-flip}}\hspace{1pt}}{=}\sum_{i\in\mathbb{N}}\tau_n(r,\pi_i^m(\sigma(s))X^{i+n+1}+\sum_{i\in\mathbb{N}}\tau_n(r,\pi_i^m(\delta(s)))X^{i+n}\\
&\stackrel{\hspace{1pt}\eqref{eq:(rX^m)X-identity}\hspace{1pt}}{=}\Big(\sum_{j\in\mathbb{N}}\tau_n(r,\pi_i^m(\sigma(s))X^{i+n}\Big)X+\sum_{i\in\mathbb{N}}\tau_n(r,\pi_i^m(\delta(s)))X^{i+n}\\
&\stackrel{\hspace{1pt}\eqref{eq:prod-flip}\hspace{1pt}}{=}((rX^m)(\sigma(s)X^n))X+(rX^m)(\delta(s)X^n),\\
r(sX^n)&\stackrel{\hspace{1pt}\eqref{eq:prod-flip}\hspace{1pt}}{=}\sum_{i\in\mathbb{N}}\tau_n(r,\pi_i^0(s))X^{i+n}=\tau_n(r,\pi_0^0(s))X^n=\tau_n(r,s)X^n.\qedhere
\end{align*}
\end{proof}

 \begin{proposition}\label{prop:flip-is-isomorphic}
 Every flipped non-associative Ore extension of $R$ is isomorphic to a flipped generalized polynomial ring $R[X;\sigma,\delta]^{\fl}$.
 \end{proposition}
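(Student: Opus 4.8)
The plan is to recover the defining data $(\sigma,\delta)$ from the axioms and then exhibit an explicit left $R$-linear bijection $R[X;\sigma,\delta]^{\fl}\to S$ that sends $X$ to $x$, and to verify that it is multiplicative. First I would use \ref{it:flip2} together with the freeness in \ref{it:flip1}: for each $r\in R$ the element $xr$ lies in $Rx+R$ and hence has a unique expression $xr=\sigma(r)x+\delta(r)$ with $\sigma(r),\delta(r)\in R$. This is exactly the pair of additive maps whose existence is invoked in \ref{it:flip3}. Biadditivity of the multiplication together with uniqueness of the coefficients relative to the basis forces $\sigma$ and $\delta$ to be additive, and evaluating the identity at $r=1$ (using $x\cdot 1=x$) gives $\sigma(1)=1$ and $\delta(1)=0$. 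Consequently $R[X;\sigma,\delta]^{\fl}$ is defined, and by \autoref{prop:generalized-is-flip} it is itself a flipped non-associative Ore extension of $R$ with indeterminate $X$ obeying precisely the identities in \ref{it:flip3}.

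Next I would define $\varphi\colon R[X;\sigma,\delta]^{\fl}\to S$ to be the left $R$-linear map determined by $\varphi(X^i)=x^i$; equivalently $\varphi(\sum_i r_iX^i)=\sum_i r_ix^i$. Since both rings are free left $R$-modules, on $\{1,X,X^2,\ldots\}$ and $\{1,x,x^2,\ldots\}$ respectively, $\varphi$ is an additive bijection fixing $R$ pointwise, so the only thing left to check is that $\varphi$ respects products. By biadditivity this reduces to showing $\varphi((rX^m)(sX^n))=(rx^m)(sx^n)$ for all $m,n\in\mathbb{N}$ and $r,s\in R$, which I would prove by induction on $m$. The base case $m=0$ is immediate: the second identity in \ref{it:flip3} holds in both rings (in the source by \autoref{prop:generalized-is-flip}), so $r(sX^n)=\tau_n(r,s)X^n$ maps to $\tau_n(r,s)x^n=r(sx^n)$.

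For the inductive step I would apply the first identity in \ref{it:flip3} inside $R[X;\sigma,\delta]^{\fl}$ to write $(rX^{m+1})(sX^n)=((rX^m)(\sigma(s)X^n))X+(rX^m)(\delta(s)X^n)$, apply $\varphi$, and invoke the induction hypothesis on the two degree-$m$ products $(rX^m)(\sigma(s)X^n)$ and $(rX^m)(\delta(s)X^n)$. The remaining ingredient is that $\varphi$ intertwines right multiplication by the indeterminate, i.e.\ $\varphi(TX)=\varphi(T)x$ for every $T$; by additivity this reduces to $\varphi((tX^j)X)=(tx^j)x$, where the source side equals $tx^{j+1}$ by \eqref{eq:(rX^m)X-identity} and the target side equals $tx^{j+1}$ by the $S$-analogue $(tx^j)x=tx^{j+1}$. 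This $S$-side identity I would obtain by specializing the first identity in \ref{it:flip3} to $s=1$, $n=0$ and using $\sigma(1)=1$, $\delta(1)=0$. Substituting back and recombining via the first identity in \ref{it:flip3} now read in $S$ turns the expression into $(rx^{m+1})(sx^n)$, closing the induction and showing $\varphi$ is a ring isomorphism.

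The step I expect to be the main obstacle is the bookkeeping of right multiplication by $x$: the recursion in \ref{it:flip3} peels off one factor of $x$ on the outside, so one must know in advance that $\varphi$ carries right multiplication by $X$ to right multiplication by $x$. Establishing the auxiliary identity $(tx^j)x=tx^{j+1}$ in $S$ — the target-side counterpart of \eqref{eq:(rX^m)X-identity} — is therefore the crux; once it is available, the source and target rings satisfy identical recursions and the isomorphism follows formally.
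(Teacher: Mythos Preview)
Your proposal is correct and follows essentially the same approach as the paper: both derive the auxiliary identity $(rx^m)x=rx^{m+1}$ in $S$ by specializing \ref{it:flip3} to $s=1$, $n=0$, and then run an induction on $m$ built on the recursion in \ref{it:flip3}. The only cosmetic differences are that the paper's map goes in the opposite direction ($S\to R[X;\sigma,\delta]^{\fl}$) and that the paper first establishes the explicit product formula \eqref{eq:prod-flip-induction} in $S$ before defining the map, whereas you verify multiplicativity of $\varphi$ directly via parallel recursions; the inductive content is identical.
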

 
\begin{proof}
 The proof is similar to the proof of  \cite[Proposition 3.3]{NOR18}. 
 
 First, let $R$ be a non-associative ring and let $S$ be a flipped non-associative Ore extension of $R$ defined by $x$. By letting $n=0$ and $s=1$ in the first identity in \ref{it:flip3}, we have that for any $m\in\mathbb{N}$ and $r\in R$,
 \begin{equation}
    rx^{m+1}=(rx^m)x.\label{eq:x-right-associates} 
 \end{equation}
 We claim that for any $m,n\in\mathbb{N}$ and $r,s\in R$,
 \begin{equation}
 	(rx^m)(sx^n)=\sum_{i\in\mathbb{N}}\tau_n(r,\pi_i^m(s))x^{i+n}.\label{eq:prod-flip-induction}
 \end{equation}
We prove this by induction on $m$ where \eqref{eq:prod-flip-induction} is the induction hypothesis.

Base case ($m=0$): 
\[
	(rx^0)(sx^n)=r(sx^n)\stackrel{\text{\ref{it:flip3}}}{=}\tau_n(r,s)x^n=\sum_{i\in\mathbb{N}}\tau_n(r,\pi_i^0(s))x^{i+n}.
\]

Induction step $(m+1)$: 
\begin{align*}
	(rx^{m+1})(sx^n)&\stackrel{\text{\ref{it:flip3}}}{=}((rx^m)(\sigma(s)x^n))x+(rx^m)(\delta(s)x^{n})\\
&\stackrel{\eqref{eq:prod-flip-induction}}{\mathmakebox[\widthof{$\stackrel{\text{\ref{it:flip3}}}{=}$}]{=}}\Big(\sum_{i\in\mathbb{N}}\tau_n(r,\pi_i^m(\sigma(s)))x^{i+n}\Big)x+\sum_{i\in\mathbb{N}}\tau_n(r,\pi_i^m(\delta(s)))x^{i+n}\\
&\stackrel{\eqref{eq:x-right-associates}}{\mathmakebox[\widthof{$\stackrel{\text{\ref{it:flip3}}}{=}$}]{=}}\sum_{i\in\mathbb{N}}\tau_n(r,\pi_i^m(\sigma(s)))x^{i+n+1}+\sum_{i\in\mathbb{N}}\tau_n(r,\pi_i^m(\delta(s)))x^{i+n}\\
&\stackrel{\phantom{\text{\ref{it:flip3}}}}{=}\sum_{i\in\mathbb{N}}\tau_n(r,\pi_{i-1}^m(\sigma(s)))x^{i+n}+\sum_{i\in\mathbb{N}}\tau_n(r,\pi_i^m(\delta(s)))x^{i+n}\\
&\stackrel{\text{\ref{it:pi-functions-split}}}{\mathmakebox[\widthof{$\stackrel{\text{\ref{it:flip3}}}{=}$}]{=}}\sum_{i\in\mathbb{N}}\tau_n(r,\pi_i^{m+1}(s))x^{i+n},
\end{align*}
where \ref{it:pi-functions-split} refers to that in \autoref{lem:pi-sigma-delta-commute}.

Now, define a function $\phi\colon S\to R[X;\sigma,\delta]^{\fl}$ by the additive extension of the relations $\phi(rx^n)=rX^n$ for any $n\in\mathbb{N}$ and $r\in R$. Then $\phi$ is an isomorphism of additive groups, and moreover, for any $m,n\in\mathbb{N}$ and $r,s\in R$, 
\begin{align*}
\phi((rx^m) (sx^n)) &\stackrel{\eqref{eq:prod-flip-induction}}{=}
\phi\Big(\sum_{i\in\mathbb{N}}\tau_n(r,\pi_i^m(s))x^{i+n}\Big) = \sum_{i\in\mathbb{N}}\tau_n(r,\pi_i^m(s))X^{i+n} \\
&\stackrel{\eqref{eq:prod-flip}}{=} (rX^m) (sX^n)=\phi(rx^m)\phi(sx^n),
\end{align*}
so $\phi$ is an isomorphism of rings.
\end{proof}
 
\begin{remark}\label{remark: graded ring}
If $\sigma$ and $\delta$ satisfy the relation $\sigma\circ\delta + \delta\circ\sigma=0$, then the flipped generalized polynomial ring $S\colonequals R[X;\sigma,\delta]^{\fl}$ of $R$ can naturally be considered as a $\mathbb{Z}_2$-graded ring. More precisely, we have $S=S_0\oplus S_0X$, where $S_0$ is the subring of $S$ generated by $R$ and $X^2$. Furthermore, the ring $S_0$ is isomorphic to the generalized polynomial ring $R[Y;\sigma^2,\delta^2]$, where $Y$ is the image of $X^2$.  
\end{remark}

Next, we give some properties of flipped generalized polynomial rings.

\begin{proposition}\label{prop:X-nuclei}
Let $R$ be a non-associative ring with additive maps $\sigma$ and $\delta$ where $\sigma(1)=1$ and $\delta(1)=0$. If $S=R[X;\sigma,\delta]^{\fl}$, then the following assertions hold:
\begin{enumerate}[label=\upshape(\roman*)]
	\item $X\in N_l(S)$ if and only if $\sigma$ is an endomorphism and $\delta$ is both a left and a right $\sigma$-derivation;\label{it:X-left-nucleus}
	\item $X\in N_m(S)$ if and only if $\im(\delta^n\circ\sigma)\subseteq C(R)$ for any $n\in\mathbb{N}$;\label{it:X-middle-nucleus}
	\item $X\in N_r(S)$ if and only if $R$ is commutative.\label{it:X-right-nucleus}
\end{enumerate}	
\end{proposition}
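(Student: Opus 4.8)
The plan is to reduce each assertion to a computation of associators on monomials. Since $S$ is a free left $R$-module with basis $\{1,X,X^2,\ldots\}$ and every operation is biadditive, $X$ lies in a given nucleus precisely when the corresponding associator vanishes on all triples $rX^m,sX^n,tX^p$ with $r,s,t\in R$. Two basic products will be used throughout: from \eqref{eq:prod-flip} one reads off $X(sX^n)=\delta(s)X^n+\sigma(s)X^{n+1}$, while \eqref{eq:(rX^m)X-identity} gives $(rX^m)X=rX^{m+1}$. Feeding these into the general product \eqref{eq:prod-flip} and expanding the $\pi$-functions by the splitting identity \ref{it:pi-functions-split} of \autoref{lem:pi-sigma-delta-commute} will let me compute each associator explicitly and collect coefficients by powers of $X$. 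I will treat the three cases in order of increasing difficulty.

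For \ref{it:X-right-nucleus}, a direct computation with $m=n=0$ gives $(r,s,X)=[r,s]X$, so $X\in N_r(S)$ forces $R$ to be commutative. Conversely, if $R$ is commutative then $\tau_n(r,s)=rs$ for both parities, so \eqref{eq:prod-flip} collapses to the product of the \emph{unflipped} generalized polynomial ring; that is, $S=R[X;\sigma,\delta]$, in which $X$ lies in the right nucleus by the defining axiom \ref{it:no3} of a non-associative Ore extension (the excerpt records that $R[X;\sigma,\delta]$ is such an extension with $x=X$).

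For \ref{it:X-left-nucleus}, I compute $(X,sX^n,tX^p)$. After substituting the two basic products and rewriting $\pi_i^{n+1}$ via \ref{it:pi-functions-split}, the coefficient of each power of $X$ splits into two brackets: an ``endomorphism bracket'' $\tau_p(\sigma(s),\sigma(c))-\sigma(\tau_p(s,c))$ and a ``derivation bracket'' $\tau_p(\delta(s),c)+\tau_p(\sigma(s),\delta(c))-\delta(\tau_p(s,c))$, where $c$ runs through the values $\pi_i^n(t)$. By \ref{it:sigma-tau-commute-endomorphism} of \autoref{lem:pi-sigma-delta-commute}, the first bracket vanishes identically iff $\sigma$ is an endomorphism; reading the second bracket for even $p$ yields exactly the left $\sigma$-derivation identity and for odd $p$ the right $\sigma$-derivation identity. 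The ``if'' direction is then immediate, as both brackets vanish, and for ``only if'' I specialize $n=0$, which isolates the single value $c=t$ and forces both conditions for each parity of $p$.

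The subtlest case is \ref{it:X-middle-nucleus}. The associator $(rX^m,X,tX^p)$ simplifies sharply: using $(rX^m)X=rX^{m+1}$, the product $X(tX^p)$, and \ref{it:pi-functions-split}, all $\delta$-terms cancel and one is left with $(rX^m,X,tX^p)=\pm\sum_i[r,\pi_i^m(\sigma(t))]X^{i+p+1}$, the sign depending on the parity of $p$. Hence $X\in N_m(S)$ iff $\pi_i^m(\sigma(t))\in C(R)$ for all $i,m$ and all $t\in R$, i.e.\ iff every composite $w\circ\sigma$, with $w$ a word in $\sigma$ and $\delta$, maps into $C(R)$. The implication to $\im(\delta^n\circ\sigma)\subseteq C(R)$ is the case $i=0$, where $\pi_0^m=\delta^m$. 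The reverse implication is the main obstacle, and I expect to clear it by the following observation: any word ending (innermost) in $\sigma$ possesses an outermost $\sigma$, so it factors as $\delta^a\circ\sigma\circ h$ for some $a\ge 0$ and some additive map $h$; since $\delta^a\circ\sigma$ already sends all of $R$ into $C(R)$ by hypothesis, so does the whole composite, regardless of $h$. As $C(R)$ is an additive subgroup and $\pi_i^m\circ\sigma$ is a finite sum of such words, this yields $\pi_i^m(\sigma(t))\in C(R)$ and completes the argument.
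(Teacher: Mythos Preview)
Your proof is correct and follows essentially the same route as the paper: in each part you compute the relevant associator on monomials via \eqref{eq:prod-flip}, expand $\pi_i^{m+1}$ by \autoref{lem:pi-sigma-delta-commute}\ref{it:pi-functions-split}, and read off the conditions, with the same word-factorization $\delta^a\circ\sigma\circ h$ for the sufficiency in \ref{it:X-middle-nucleus} and the same reduction to the unflipped ring (hence to axiom \ref{it:no3}) for the converse in \ref{it:X-right-nucleus}. The only cosmetic difference is in \ref{it:X-left-nucleus}, where you package necessity and sufficiency into a single associator computation, whereas the paper first specializes to $(X,r,s)$ and $(X,s,rX)$ for necessity and then does the general computation for sufficiency; note, incidentally, that at a fixed power $X^k$ the two brackets you describe actually involve adjacent values $c'=\pi_{k-p-1}^n(t)$ and $c=\pi_{k-p}^n(t)$ rather than a single $c$, but your specialization $n=0$ separates them exactly as needed.
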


\begin{proof}
\ref{it:X-left-nucleus}: We first show that the conditions are necessary. If $r,s\in R$ are arbitrary, then in $S$,
\begin{align*}
X(rs)&=\sigma(rs)X+\delta(rs),\\
(Xr)s&=(\sigma(r)X+\delta(r))s=(\sigma(r)\sigma(s))X+\sigma(r)\delta(s)+\delta(r)s.
\end{align*}
Hence, by comparing coefficients, if $X\in N_l(S)$, then $\sigma$ is an endomorphism and $\delta$ is a left $\sigma$-derivation. Similarly,
\begin{align*}
X(s(rX))&=X((rs)X)=\sigma(rs)X^2+\delta(rs)X,\\
(Xs)(rX)&=(\sigma(s)X+\delta(s))(rX)=(\sigma(r)\sigma(s))X^2+(\delta(r)\sigma(s)+r\delta(s))X,	
\end{align*}
so if $X\in N_l(S)$, then $\delta$ must also be a right $\sigma$-derivation. Next, we show that the conditions are sufficient. To this end, assume that $\sigma$ is an endomorphism and that $\delta$ is both a left and a right $\sigma$-derivation. Then, for any $n\in\mathbb{N}$ and $r,s\in R$,
\begin{align}
\delta(\tau_n(r,s))&=\begin{cases}\delta(rs)&\text{if } n \text{ is even},\\
\delta(sr)&\text{if } n \text{ is odd}.
\end{cases}	
=\begin{cases}\sigma(r)\delta(s)+\delta(r)s&\text{if } n \text{ is even},\\
\delta(s)\sigma(r)+s\delta(r)&\text{if } n \text{ is odd}.
\end{cases}\label{eq:delta-tau-commute}\\
&=\tau_n(\sigma(r),\delta(s))+\tau_n(\delta(r),s).\nonumber
\end{align}
It is sufficient to show that $X((rX^m)(sX^n))=(X(rX^m))(sX^n)$ for any $m,n\in\mathbb{N}$ and $r,s\in R$. By using \eqref{eq:delta-tau-commute} together with \ref{it:pi-functions-split} and \ref{it:sigma-tau-commute-endomorphism} in \autoref{lem:pi-sigma-delta-commute},

\allowdisplaybreaks
\begin{align*}
X((rX^m)(sX^n))\stackrel{\phantom{\text{\ref{it:sigma-tau-commute-endomorphism}}}}{=}&X\sum_{i\in\mathbb{N}}\tau_n(r,\pi_i^m(s))X^{i+n}=\sum_{i\in\mathbb{N}}X(\tau_n(r,\pi_i^m(s))X^{i+n})\\
\stackrel{\phantom{\text{\ref{it:sigma-tau-commute-endomorphism}}}}{=}&\sum_{i\in\mathbb{N}}\sigma(\tau_n(r,\pi_i^m(s)))X^{i+n+1}+\sum_{i\in\mathbb{N}}\delta(\tau_n(r,\pi_i^m(s)))X^{i+n},\\ 
(X(rX^m))(sX^n)\stackrel{\phantom{\text{\ref{it:sigma-tau-commute-endomorphism}}}}{=}&(\sigma(r)X^{m+1}+\delta(r)X^m)(sX^n)\\
\stackrel{\phantom{\text{\ref{it:sigma-tau-commute-endomorphism}}}}{=}&\sum_{i\in\mathbb{N}}\tau_n(\sigma(r),\pi_i^{m+1}(s))X^{i+n}+\sum_{i\in\mathbb{N}}\tau_n(\delta(r),\pi_i^{m}(s))X^{i+n}\\
\stackrel{\text{\ref{it:pi-functions-split}}}{\mathmakebox[\widthof{$\stackrel{\text{\ref{it:sigma-tau-commute-endomorphism}}}{=}$}]{=}}&\sum_{i\in\mathbb{N}}\tau_n(\sigma(r),\sigma(\pi_{i-1}^{m}(s)))X^{i+n}+\sum_{i\in\mathbb{N}}\tau_n(\sigma(r),\delta(\pi_i^{m}(s)))X^{i+n}\\
&+\sum_{i\in\mathbb{N}}\tau_n(\delta(r),\pi_i^{m}(s))X^{i+n}\\
\stackrel{\phantom{\text{\ref{it:sigma-tau-commute-endomorphism}}}}{=}&\sum_{i\in\mathbb{N}}\tau_n(\sigma(r),\sigma(\pi_{i}^{m}(s)))X^{i+n+1}+\sum_{i\in\mathbb{N}}\tau_n(\sigma(r),\delta(\pi_i^{m}(s)))X^{i+n}\\
&+\sum_{i\in\mathbb{N}}\tau_n(\delta(r),\pi_i^{m}(s))X^{i+n}\\
\stackrel{\text{\ref{it:sigma-tau-commute-endomorphism}}}{=}&
\sum_{i\in\mathbb{N}}\sigma(\tau_n(r,\pi_{i}^{m}(s)))X^{i+n+1}+\sum_{i\in\mathbb{N}}\tau_n(\sigma(r),\delta(\pi_i^{m}(s)))X^{i+n}\\
&+\sum_{i\in\mathbb{N}}\tau_n(\delta(r),\pi_i^{m}(s))X^{i+n}\\
\stackrel{\eqref{eq:delta-tau-commute}}{\mathmakebox[\widthof{$\stackrel{\text{\ref{it:sigma-tau-commute-endomorphism}}}{=}$}]{=}}&
\sum_{i\in\mathbb{N}}\sigma(\tau_n(r,\pi_i^m(s)))X^{i+n+1}+\sum_{i\in\mathbb{N}}\delta(\tau_n(r,\pi_i^m(s)))X^{i+n}.
\end{align*}

\noindent\ref{it:X-middle-nucleus}: By using \ref{it:pi-functions-split} in \autoref{lem:pi-sigma-delta-commute}, for any $m,n\in\mathbb{N}$ and $r,s\in R$,
\begin{align*}
((rX^m)X)(sX^n)&\stackrel{\phantom{\text{\ref{it:pi-functions-split}}}}{=}(rX^{m+1})(sX^n)=\sum_{i\in\mathbb{N}}\tau_n(r,\pi_i^{m+1}(s))X^{i+n}\\
&\stackrel{\text{\ref{it:pi-functions-split}}}{=}\sum_{i\in\mathbb{N}}\tau_n(r,\pi_{i-1}^{m}(\sigma(s)))X^{i+n}+\sum_{i\in\mathbb{N}}\tau_n(r,\pi_i^{m}(\delta(s)))X^{i+n}\\
&\stackrel{\phantom{\text{\ref{it:pi-functions-split}}}}{=}\sum_{i\in\mathbb{N}}\tau_n(r,\pi_{i}^{m}(\sigma(s)))X^{i+n+1}+\sum_{i\in\mathbb{N}}\tau_n(r,\pi_i^{m}(\delta(s)))X^{i+n}\\
&\stackrel{\phantom{\text{\ref{it:pi-functions-split}}}}{=}\sum_{i\in\mathbb{N}}\tau_{n+1}(\pi_{i}^{m}(\sigma(s)),r)X^{i+n+1}+\sum_{i\in\mathbb{N}}\tau_n(r,\pi_i^{m}(\delta(s)))X^{i+n},\\
(rX^m)(X(sX^n))&\stackrel{\phantom{\text{\ref{it:pi-functions-split}}}}{=}(rX^m)(\sigma(s)X^{n+1}+\delta(s)X^n)\\
&\stackrel{\phantom{\text{\ref{it:pi-functions-split}}}}{=}\sum_{i\in\mathbb{N}}\tau_{n+1}(r,\pi_i^m(\sigma(s)))X^{i+n+1}+\sum_{i\in\mathbb{N}}\tau_n(r,\pi_i^m(\delta(s)))X^{i+n}.
\end{align*}

Hence $X\in N_m(S)$ if and only if 
\[
\sum_{i\in\mathbb{N}}\tau_{n+1}(\pi_i^m(\sigma(s)),r)X^{i+n+1}=\sum_{i\in\mathbb{N}}\tau_{n+1}(r,\pi_i^m(\sigma(s)))X^{i+n+1}.
\]
By comparing coefficients, the above equality is equivalent to $\tau_{n+1}(\pi_i^m(\sigma(s)),r)=\tau_{n+1}(r,\pi_i^m(\sigma(s)))$, which in turn is equivalent to $r\pi_i^m(\sigma(s))=\pi_i^m(\sigma(s))r$ for any $i,m\in\mathbb{N}$ and $r,s\in R$. We claim that the latter equality is equivalent to $\im(\delta^n\circ\sigma)\subseteq C(R)$ for any $n\in\mathbb{N}$. By setting $m=n$ and $i=0$ in $r\pi_i^m(\sigma(s))=\pi_i^m(\sigma(s))r$, we see that the conditions are necessary. To show sufficiency, we note that each term in $\pi_i^m(\sigma(s))$ must contain $\delta^n\circ\sigma$ outermost for some $n \leq m-i$, and so we are done.\\

\noindent\ref{it:X-right-nucleus}: If $r,s\in R$ are arbitrary, then $r(sX)=(sr)X$, so $r(sX)=(rs)X$ if and only if $rs=sr$. Hence, if $X\in N_r(S)$, then $R$ is commutative. To prove the converse, assume that $R$ is commutative. Then $\tau_n(r,s)=rs$ for any $n\in\mathbb{N}$ and $r,s\in R$, so $S$ is a generalized polynomial ring $R[X;\sigma,\delta]$. Hence $S$ is a non-associative Ore extension of $R$ with $x=X$, so by \ref{it:no3} in \autoref{def:non-assoc-ore}, $X\in N_r(S)$.
\end{proof}

\begin{corollary}\label{cor:flip-is-non-associative-ore}
Let $R$ be a non-associative ring with additive maps $\sigma$ and $\delta$ where $\sigma(1)=1$ and $\delta(1)=0$. Then $R[X;\sigma,\delta]^{\fl}=R[X;\sigma,\delta]$ if and only if $R$ is commutative.	
\end{corollary}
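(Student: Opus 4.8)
The plan is to compare the two multiplications directly. Both $R[X;\sigma,\delta]$ and its flip $R[X;\sigma,\delta]^{\fl}$ are, by construction, the same additive group $R[X]$ equipped with possibly different products, so the asserted equality $R[X;\sigma,\delta]^{\fl}=R[X;\sigma,\delta]$ means precisely that these two products coincide. The unflipped product is given by $(rX^m)(sX^n)=\sum_{i\in\mathbb{N}}(r\pi_i^m(s))X^{i+n}$ from \eqref{eq:prod-ore}, while the flipped product is given by $(rX^m)(sX^n)=\sum_{i\in\mathbb{N}}\tau_n(r,\pi_i^m(s))X^{i+n}$ from \eqref{eq:prod-flip}. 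The only difference lies in the first factor: the unflipped product multiplies $r$ on the left of $\pi_i^m(s)$, whereas the flipped product applies $\tau_n$ to the pair $(r,\pi_i^m(s))$, and by definition $\tau_n(r,s)=rs$ when $n$ is even while $\tau_n(r,s)=sr$ when $n$ is odd.

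For the ``if'' direction, I would simply observe that if $R$ is commutative, then $\tau_n(r,s)=rs=sr$ for all $r,s\in R$ and all $n\in\mathbb{N}$, independently of the parity of $n$; hence the two product formulas coincide termwise and the rings are equal. This is essentially immediate.

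For the ``only if'' direction, I would extract commutativity from a single well-chosen pair of monomials. Setting $m=0$ and $n=1$, and recalling that $\pi_0^0=\id_R$ while $\pi_i^0=0$ for $i\neq0$, the unflipped product yields $(rX^0)(sX^1)=(rs)X$, whereas the flipped product yields $(rX^0)(sX^1)=\tau_1(r,s)X=(sr)X$ since $n=1$ is odd. Equality of the two rings forces $(rs)X=(sr)X$, hence $rs=sr$, for all $r,s\in R$; that is, $R$ is commutative. This is the same computation already carried out for \ref{it:X-right-nucleus} in \autoref{prop:X-nuclei}.

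There is no substantive obstacle: the corollary is an immediate consequence of the product formula \eqref{eq:prod-flip} together with the definition of $\tau_n$. The only point meriting a moment's attention is the interpretation of the equality sign as a genuine coincidence of multiplications on the shared additive group $R[X]$, rather than mere isomorphism; but this is built into the definitions, so once it is noted the argument reduces to a one-line comparison of coefficients.
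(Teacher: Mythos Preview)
Your proof is correct and essentially the same as the paper's. The paper phrases the ``only if'' direction by noting that $X\in N_r(R[X;\sigma,\delta])=N_r(R[X;\sigma,\delta]^{\fl})$ and invoking \ref{it:X-right-nucleus} of \autoref{prop:X-nuclei}, but as you yourself observe, that invocation unwinds to exactly your direct comparison $r(sX)=(rs)X$ versus $r(sX)=(sr)X$.
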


\begin{proof}
Let $S=R[X;\sigma,\delta]$. If $R$ is commutative, then $\tau_n(r,s)=rs$ for any $n\in\mathbb{N}$ and $r,s\in R$, and so $S^{\fl}=S$. If $S^{\fl}=S$, then $X\in N_r(S)=N_r(S^{\fl})$ which by \ref{it:X-right-nucleus} in \autoref{prop:X-nuclei} is equivalent to $R$ being commutative. 
\end{proof}

\begin{proposition}\label{prop:associativity-equivalence}
Let $S=R[X;\sigma,\delta]^{\fl}$ be a flipped  generalized polynomial ring over a non-associative ring $R$. Then the following conditions are equivalent: 
\begin{enumerate}[label=\upshape(\roman*)]
	\item $R$ is associative and commutative, $\sigma$ is an endomorphism and $\delta$ is a left $\sigma$-derivation;\label{it:assoc-cond1}
	\item $S$ is an ordinary generalized polynomial ring;\label{it:assoc-cond2}
	\item $S$ is associative.\label{it:assoc-cond3}
\end{enumerate}	
\end{proposition}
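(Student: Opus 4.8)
The plan is to prove the cyclic chain of implications $\ref{it:assoc-cond1} \Rightarrow \ref{it:assoc-cond2} \Rightarrow \ref{it:assoc-cond3} \Rightarrow \ref{it:assoc-cond1}$, leaning on \autoref{cor:flip-is-non-associative-ore} and \autoref{prop:X-nuclei} so that essentially no direct computation with the product \eqref{eq:prod-flip} is needed. For $\ref{it:assoc-cond1} \Rightarrow \ref{it:assoc-cond2}$, I would first use that $R$ is commutative, so by \autoref{cor:flip-is-non-associative-ore} the flip is invisible and $S = R[X;\sigma,\delta]^{\fl} = R[X;\sigma,\delta]$ carries the ordinary product \eqref{eq:prod-ore}; since moreover $R$ is associative, $\sigma$ is an endomorphism, and $\delta$ is a left $\sigma$-derivation, $S$ is by definition an ordinary generalized polynomial ring. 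For $\ref{it:assoc-cond2} \Rightarrow \ref{it:assoc-cond3}$, I would recall from \autoref{subsec:ore-extensions} (see also \cite[Proposition 3.2]{NOR18}) that every ordinary generalized polynomial ring is an Ore extension of $R$ and hence associative by axiom \ref{it:o3}.

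The substantive direction is $\ref{it:assoc-cond3} \Rightarrow \ref{it:assoc-cond1}$. The key observation is that associativity of $S$ forces $X$ into the full nucleus, so that $X$ lies simultaneously in $N_l(S)$, $N_m(S)$, and $N_r(S)$. From $X \in N_r(S)$ and \ref{it:X-right-nucleus} of \autoref{prop:X-nuclei} I would obtain that $R$ is commutative, and from $X \in N_l(S)$ and \ref{it:X-left-nucleus} that $\sigma$ is an endomorphism and $\delta$ is both a left and a right $\sigma$-derivation, in particular a left $\sigma$-derivation. The only remaining requirement, associativity of $R$, is immediate, since the map $r \mapsto rX^0$ realizes $R$ as a subring of the associative ring $S$ (as noted after \eqref{eq:prod-flip}) and any subring of an associative ring is associative. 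Collecting these facts yields \ref{it:assoc-cond1} and closes the cycle.

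I do not anticipate a real obstacle; the only care needed is organizational. It is worth noting that the middle-nucleus condition \ref{it:X-middle-nucleus} of \autoref{prop:X-nuclei} is never invoked: the structural constraints of \ref{it:assoc-cond1} are already captured by the right and left nuclei together with the trivial fact that $R$ embeds as a subring of $S$. This redundancy serves as a mild sanity check that the characterizations in \autoref{prop:X-nuclei} are mutually consistent.
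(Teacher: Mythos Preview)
Your proposal is correct and follows essentially the same route as the paper: the same cyclic chain $\ref{it:assoc-cond1}\Rightarrow\ref{it:assoc-cond2}\Rightarrow\ref{it:assoc-cond3}\Rightarrow\ref{it:assoc-cond1}$, the same appeal to \autoref{cor:flip-is-non-associative-ore} for the first implication, the known associativity of ordinary generalized polynomial rings for the second (the paper cites \cite{Nys13,Ric14} rather than \cite{NOR18}), and the combination of $X\in N_l(S)\cap N_r(S)$ via \autoref{prop:X-nuclei} with $R\subseteq S$ for the third. Your observation that \ref{it:X-middle-nucleus} is never needed matches the paper's proof exactly.
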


\begin{proof}
\ref{it:assoc-cond1}$\implies$\ref{it:assoc-cond2}: If $R$ is commutative, then $R[X;\sigma,\delta]^{\fl}=R[X;\sigma,\delta]$ by \autoref{cor:flip-is-non-associative-ore}, and so the result follows immediately.\\

\noindent\ref{it:assoc-cond2}$\implies$\ref{it:assoc-cond3}: For a proof of the associativity of the ordinary generalized polynomial ring $R[X;\sigma,\delta]$, see \cite{Nys13} or \cite{Ric14}.\\

\noindent\ref{it:assoc-cond3}$\implies$\ref{it:assoc-cond1}: Assume that $S$ is associative. Then so is $R$, since $R$ is a subring of $S$. Also, $X\in N_l(S)\cap N_r(S)$, so the result follows from \ref{it:X-left-nucleus} and \ref{it:X-right-nucleus} in \autoref{prop:X-nuclei}.
\end{proof}

\section{The Cayley--Dickson construction}\label{sec:cayley-dickson}
Let $A$ be a non-associative algebra over an associative and commutative ring $K$ of scalars. We say that a $K$-linear map $*\colon A\to A$, also written as $a\mapsto a^*$, is an \emph{involution} of $A$ if $(ab)^*=b^*a^*$ and $(a^*)^*=a$ hold for any $a,b\in A$. In particular, an involution is an anti-automorphism. We note that $1^*=1$ since for any $a\in A$, $1^*a^*=(a1)^*=a^*=(1a)^*=a^*1^*$, so by the surjectivity of $*$ and the uniqueness of $1$, $1^*=1$. 

We refer to a non-associative algebra with an involution $*$ as a non-associative \emph{$*$-algebra}. If $A$ and $B$ are non-associative $*$-algebras over the same ring $K$, then a \emph{$*$-homomorphism} $f\colon A\to B$ is a $K$-algebra homomorphism that is compatible with the involutions on $A$ and $B$, i.e., with some abuse of notation,  $f(a^*)=f(a)^*$ for any $a\in A$. A bijective $*$-homomorphism is called a $*$-isomorphism, and the corresponding algebras are said to be \emph{isomorphic as $*$-algebras}. For an introduction to $*$ -algebras, we refer the reader to \cite[Section 2.2]{McC04}.

From the above discussion, we note that any non-associative $*$-algebra $A$ naturally gives rise to a flipped non-associative skew polynomial ring $A[X;*]^{\fl}\colonequals A[X;*,0]^{\fl}$. From \eqref{eq:prod-flip}, the multiplication in $A[X;*]^{\fl}$ is thus given by
\begin{equation}
(aX^m)(bX^n)=\tau_n(a,*^m(b))X^{m+n}\label{eq:star-skew-product}
\end{equation}
for arbitrary $m,n\in\mathbb{N}$ and $a,b\in A$. The next proposition shows that $A[X;*]^{\fl}$ can also be made into a $*$-algebra.

\begin{proposition}\label{prop: extendion of star}
Let $A$ be a non-associative $*$-algebra over an associative and commutative ring $K$. Then the maps $\alpha,\beta:A[X;*]^{\fl}\to A[X;*]^{\fl}$ defined by
\begin{align*}
\alpha(a_0+a_1X+a_2X^2+\cdots)&=\sum_{i\in\mathbb{N}}(-1)^i*^{i+1}(a_i)X^i,\\
\beta(a_0+a_1X+a_2X^2+\cdots)&=\sum_{i\in\mathbb{N}}*^{i+1}(a_i)X^i
\end{align*}
are involutions of $A[X;*]^{\fl}$, which extend the involution $*$ of $A$. Furthermore, if $*$ is nontrivial, $A$ does not have zero divisors, and $2A\neq 0$, then $\alpha$ and $\beta$ are the only involutions of $A[X;*]^{\fl}$ that extend the involution $*$ of $A$.
\end{proposition}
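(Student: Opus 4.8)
The plan is to verify the claimed properties of $\alpha$ and $\beta$ directly from the explicit multiplication formula \eqref{eq:star-skew-product}, and then handle the uniqueness via a coefficient-by-coefficient argument. First I would check that each map is well defined, $K$-linear, and extends $*$ (the latter being immediate since on degree-zero elements both maps reduce to $a_0\mapsto {*}(a_0)$). The core verification is that $\alpha$ and $\beta$ are involutions, i.e., that each satisfies $(fg)^*=g^*f^*$ and $(f^*)^*=f$. By biadditivity and $K$-linearity it suffices to check these on monomials $aX^m$ and $bX^n$. For the involutive property $(f^*)^*=f$, I would compute on a monomial: for $\beta$, $\beta(\beta(aX^m))=\beta({*}^{m+1}(a)X^m)={*}^{m+1}({*}^{m+1}(a))X^m={*}^{2m+2}(a)X^m=aX^m$, using $*^2=\id$; the sign bookkeeping for $\alpha$ works the same way since $(-1)^m(-1)^m=1$.

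The main computation is the anti-multiplicativity. Using \eqref{eq:star-skew-product}, I would expand $\beta\big((aX^m)(bX^n)\big)=\beta\big(\tau_n(a,{*}^m(b))X^{m+n}\big)={*}^{m+n+1}(\tau_n(a,{*}^m(b)))X^{m+n}$, and separately compute $\beta(bX^n)\cdot\beta(aX^m)=({*}^{n+1}(b)X^n)({*}^{m+1}(a)X^m)=\tau_m({*}^{n+1}(b),{*}^{n}({*}^{m+1}(a)))X^{m+n}$. The two coefficients must then be matched. Here the key facts are \autoref{lem:pi-sigma-delta-commute}\ref{it:sigma-tau-commute-anti-endomorphism} (since $*$ is an anti-endomorphism, ${*}(\tau_n(r,s))=\tau_n({*}(s),{*}(r))$), the parity behaviour of $\tau_n$ under iteration, and the fact that $*$ reverses products. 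I expect this matching to reduce, after collecting the total power of $*$ applied to each of $a$ and $b$ and tracking the parity of $m,n,m+n$, to an identity that holds precisely because $*$ is an involution; the sign factors $(-1)^i$ in $\alpha$ are designed to absorb the extra parity discrepancy that $\beta$ would otherwise incur, which is why both maps work. This parity/sign bookkeeping is the step I expect to be the main obstacle, since one must carefully verify that the exponents of $*$ on each factor agree modulo $2$ (that is all that matters, as ${*}^2=\id$) and that the flip $\tau$ lands the factors in the correct order.

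For the uniqueness claim, assume $\gamma$ is any involution of $A[X;*]^{\fl}$ extending $*$. I would first argue that $\gamma$ must preserve the degree filtration, or more precisely determine the image of $X$. Writing $\gamma(X)=\sum_i c_iX^i$, I would apply $\gamma$ to a relation forced by the ring structure — for instance, compute $\gamma(X)\gamma(X)=\gamma(XX)=\gamma(X^2)$ and also use that $\gamma$ fixes $A$ and is an anti-homomorphism on products of the form $(aX^0)(X)$ and $(X)(aX^0)$ — to pin down $\gamma(X)$. The hypotheses enter exactly here: since $*$ is nontrivial, $A$ has no zero divisors, and $2A\neq 0$, the candidate coefficients $c_i$ are forced to vanish except for $c_1$, and $c_1$ is forced to be $\pm 1$ (the two signs giving $\alpha$ and $\beta$). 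Once $\gamma(X)=\pm X$ is established, an induction on degree using $\gamma(aX^n)=\gamma((aX^0)X^n)$ together with anti-multiplicativity shows $\gamma$ is completely determined on all monomials, and the two sign choices yield exactly $\alpha$ and $\beta$. The delicate point in this part is justifying that the higher-degree coefficients of $\gamma(X)$ must vanish, where the no-zero-divisors hypothesis and $2A\neq 0$ are indispensable to rule out degenerate alternatives.
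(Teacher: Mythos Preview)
Your plan for the first half --- verifying that $\alpha$ and $\beta$ are involutions by checking $K$-linearity, $\alpha^2=\beta^2=\id$, and anti-multiplicativity on monomials via the parity identities for $\tau_n$ and $*$ --- is essentially the paper's approach. The paper packages the parity bookkeeping into the two identities $\tau_m(a,b)=\tau_{m+1}(b,a)$ and $*^{m+n}(\tau_m(*^n(a),b))=\tau_n(*^m(a),*^{m+n}(b))$, which is exactly the reduction you describe.

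The uniqueness argument, however, has a genuine gap: you have misidentified where the difficulty lies. Bounding the degree of $\gamma(X)$ is not delicate at all --- since $\gamma^2(X)=X$, a one-line degree count forces $\deg\gamma(X)=1$, so $\gamma(X)=a+bX$ with $a,b\in A$; then $\gamma^2(X)=X$ and the absence of zero divisors immediately give $b=\pm 1$. None of the hypotheses ``$*$ nontrivial'', ``no zero divisors'', ``$2A\neq 0$'' are needed to kill the higher-degree coefficients. The hard part, which your plan does not address, is showing that the \emph{constant term} $a$ vanishes. The relation $Xc=c^*X$ only yields $c^*a=ac$ for all $c$, which by itself does not force $a=0$. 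In the paper, the case $b=-1$ requires applying $\gamma$ to the identity $(cX)X=cX^2$ and expanding both sides through the flipped multiplication; comparing coefficients gives $2(c^*-c)a=0$, and only then do all three hypotheses combine to force $a=0$. The case $b=1$ is shorter but still uses $2A\neq 0$ and the domain hypothesis via $2a^2=0$. Your proposal should isolate this constant-term step and supply the relevant computation; as written, the claim that ``the candidate coefficients $c_i$ are forced to vanish except for $c_1$'' skips precisely the place where the work happens. (Minor point: $\gamma$ does not fix $A$; it acts as $*$ on $A$.)
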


\begin{proof}
First, we show that $\alpha$ is an  involution of $ A[X;*]^{\fl}$.  It is clear from the definition that $\alpha$ is $K$-linear and satisfies $\alpha^2(p)=p$ for all $p\in A[X;*]^{\fl}$. Hence we only need to show that $\alpha(pq)=\alpha(q)\alpha(p)$ for any $p,q\in A[X;*]^{\fl}$. We claim that for any $m,n\in\mathbb{N}$ and $a,b\in A$,  
\begin{align}
\tau_m(a,b)&=\tau_{m+1}(b,a),\label{eq:tau-commutation}\\
	*^{m+n}(\tau_m(*^n(a),b))&=\tau_n(*^m(a),*^{m+n}(b)).\label{eq:sigma-tau-commutation}
\end{align}
The first equality is immediate. Regarding the last equality, we note that if $m+n$ is even, then $*^{m+n}$ is the identity map, and either $m$ and $n$ are both even, or $m$ and $n$ are both odd, so that $\tau_m=\tau_n$. If $m+n$ is odd, then $*^{m+n}$ is an anti-endomorphism and either $m$ is even and $n$ is odd, or $m$ is odd and $n$ is even, so that $\tau_m(a,b)=\tau_n(b,a)$. The result now follows from \ref{it:sigma-tau-commute-anti-endomorphism} in \autoref{lem:pi-sigma-delta-commute}. Now, write $p=\sum_{i\in\mathbb{N}}p_iX^i$ and $q=\sum_{j\in\mathbb{N}}q_jX^j$ where $p_i,q_j\in A$. By definition, $\alpha(p)=\sum_{i\in\mathbb{N}}(-1)^i*^{i+1}(p_i)X^i$, so 
\begin{align*}
\alpha(pq)&\stackrel{\phantom{\eqref{eq:tau-commutation}}}{=}\alpha\Big(\Big(\sum_{i\in\mathbb{N}}p_iX^i\Big)\Big(\sum_{j\in\mathbb{N}}q_jX^j\Big)\Big)\stackrel{\eqref{eq:star-skew-product}}{=}\alpha\Big(\sum_{i,j\in\mathbb{N}}\tau_j(p_i,*^i(q_j))X^{i+j}\Big) \\
&\stackrel{\phantom{\eqref{eq:tau-commutation}}}{=}\sum_{i,j\in\mathbb{N}}(-1)^{i+j}*^{i+j+1}(\tau_j(p_i,*^i(q_j)))X^{i+j}\\
&\stackrel{\eqref{eq:tau-commutation}}{=}\sum_{i,j\in\mathbb{N}}(-1)^{i+j}*^{i+j+1}(\tau_{j+1}(*^i(q_j),p_i))X^{i+j}\\
&\stackrel{\eqref{eq:sigma-tau-commutation}}{=}\sum_{i,j\in\mathbb{N}}(-1)^{i+j}\tau_{i}(*^{j+1}(q_j),*^{i+j+1}(p_i))X^{i+j}\\
&\stackrel{\eqref{eq:star-skew-product}}{=}\Big(\sum_{j\in\mathbb{N}}(-1)^j*^{j+1}(q_j)X^j\Big)\Big(\sum_{i\in\mathbb{N}}(-1)^i*^{i+1}(p_i)X^i\Big)=\alpha(q)\alpha(p).
\end{align*}
In a similar fashion, one can show that $\beta$ is an involution of $A[X;*]^{\fl}$. 

To prove the last statement, assume that $*$ is nontrivial, $A$ does not have zero divisors, and $2A\neq 0$. Let $\gamma$ be an involution of $ A[X;*]^{\fl}$ that extends $*$ of $A$. Our goal is to show that $\gamma$ coincides with either $\alpha$ or $\beta$.  Since $\gamma^2(X)=X$, an argument involving counting degrees reveals that $\gamma(X)$ must be of degree $1$. So, $\gamma(X)=a+bX$ for some $a,b\in A$. The identity $\gamma^2(X)=X$ gives
\[
X=\gamma^2(X)=\gamma(a+bX)=a^*+\gamma(X)b^*=a^*+(a+bX)b^*=a^*+ab^*+b^2X.
\]
It follows that $a^*+ab^*=0$ and $b^2=1$. 
Since $A$ does not have zero divisors and $(b+1)(b-1)=0$, $b$ is either $1$ or $-1$. Applying $\gamma$ to the identity $Xc=c^*X$, where $c\in A$, yields $c^*(a+bX)=(a+bX)c$, from which it follows that $c^*a=ac$ for all $a\in A$. 
We distinguish two cases. 

\textbf{Case 1.} $b=-1$. From $a^*+ab^*=0$, it follows that $a^*=a$. We also have $c^*a=ac$ for all $a\in A$. We apply $\gamma$ to both sides of the identity $(cX)X=cX^2$. On the one hand, we have
\begin{align*}
\gamma((cX)X)&=\gamma(X)\gamma(cX)=\gamma(X)\big(\gamma(X)\gamma(c) \big) =(a-X)\big( (a-X)c^* \big)\\
&=(a-X)( ac^*-Xc^*)=(a-X)(ac^*-cX)\\
&=a(ac^*)-X(ac^*)-a(cX)+X(cX)\\
&=a(ac^*)-(ac^*)^*X-(ca)X+c^*X^2\\
&=a(ac^*)-(ca^*)X-(ca)X+c^*X^2=a(ac^*)-(2ca)X+c^*X^2.
\end{align*}
On the other hand, we have
\begin{align*}
\gamma(cX^2)&=\gamma(X^2)\gamma(c)=\big( \gamma(X)\gamma(X) \big)\gamma(c) =\big( (a-X)(a-X) \big)c^*\\
&=(a^2-Xa-aX+X^2)c^*=(a^2-a^*X-aX+X^2)c^*\\
&=(a^2-2aX+X^2)c^*=a^2c^*-(2aX)c^*+X^2c^*\\
&=a^2c^*-(2ac)X+c^*X^2=a^2c^*-(2c^*a)X+c^*X^2.
\end{align*}
Comparing the coefficients of the flipped polynomials, we see that $2c^*a=2ca$, or equivalently, $2(c^*-c)a=0$, for all $a\in A$. Since $A$ does not have zero divisors, $2A\neq0$, and $*$ is nontrivial, we conclude that $a=0$, that is, $\gamma=\alpha$.

\textbf{Case 2.} $b=1$. It follows from $a^*+ab^*=0$ that $a^*+a=0$. We also have $c^*a=ac$ for all $a\in A$. Setting $c=a$, we obtain $a^*a=a^2$ implying $2a^2=0$ because $a^*=-a$.  Since $A$ does not have zero divisors and $2A\neq 0$, we conclude that $a=0$, that is, $\gamma=\beta$. This completes the proof of the proposition. 
\end{proof}

In what follows, we consider $A[X;*]^{\fl}$ as a $*$-algebra using the involution $\alpha$ introduced in \autoref{prop: extendion of star}. Also, we follow McCrimmon's  conventions in \cite{McC85}.

Let $A$ be a non-associative $*$-algebra over an associative and commutative ring $K$, and assume that $\mu\in K$ is a cancellable scalar, meaning that if $\mu a=0$ for some $a\in A$, then $a=0$. Further assume that $K$ acts \emph{faithfully} on $A$, meaning that if $kA=0$ for some $k\in K$, then $k=0$. We can now construct a new non-associative $*$-algebra over $K$, the \emph{Cayley double of $A$}, written $\Cay(A,\mu)$. $\Cay(A,\mu)$ is defined as $A\oplus A$ with involution and product defined by $(a,b)^*=(a^*,-b)$ and $(a,b)(c,d)=(ac + \mu d^*b, da+bc^*)$ for arbitrary $a,b,c,d\in A$, respectively.

\begin{example}\label{ex:classical-Cayley-Dickson}
If we start with $*=\id_K$ on $K=\mathbb{R}$ and choose $\mu=-1$ and then double, we obtain $\mathbb{C}, \mathbb{H}, \mathbb{O},\ldots$. If we instead choose $\mu=+1$ and then double, we obtain the so-called \emph{split} versions $\mathbb{C}', \mathbb{H}', \mathbb{O}', \ldots$ of the above algebras:
\begin{align*}
\Cay(\mathbb{R},-1)&\cong \mathbb{C},& \Cay(\mathbb{R},+1)\phantom{'}&\cong  \mathbb{C}',\\
\Cay(\mathbb{C},-1)&\cong\mathbb{H},	&\Cay(\mathbb{C}',+1)&\cong  \mathbb{H}',\\
\Cay(\mathbb{H},-1)&\cong\mathbb{O}, &\Cay(\mathbb{H}',+1)&\cong  \mathbb{O}',\\
&\phantom{\vdots\vdots} \vdots && \phantom{\vdots\vdots} \vdots 
\end{align*}
\end{example}

There is a connection between $A[X;*]^{\fl}$ and the concept of a Cayley double.  More precisely, the Cayley double of $A$ is isomorphic to a quotient ring of $A[X;*]^{\fl}$ (see \autoref{thm:cayley-dickson-isomorphism}), and furthermore, $A[X;*]^{\fl}$ is isomorphic to the Cayley double of an appropriate ring (see \autoref{thm:cayley-dickson-isomorphism1}). 

Before the next theorem, let us remark that if $I$ is an ideal of $B\colonequals A[X;*]^{\fl}$ such that $I^*\subseteq I$, then the extended involution $*$ of $B$ induces a well-defined involution of $B/I$ by $[p]^*\colonequals [p^*]$ for any $p\in B$ and its equivalence class $[p]\in B/I$. In this way, $B/I$ naturally becomes a $*$-algebra. In particular, if $I=\langle X^2-\mu\rangle$, then by iteratively using that $(pq)^*=q^*p^*$ for any $p,q\in B$ and $(X^2-\mu)^*=X^2-\mu\in I$, we see that $I^*\subseteq I$. 

\begin{theorem}\label{thm:cayley-dickson-isomorphism}
$\Cay(A,\mu)$ and $A[X;*]^{\fl}/\langle X^2-\mu\rangle$ are isomorphic as $*$-algebras.
\end{theorem}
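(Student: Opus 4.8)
The plan is to construct an explicit $*$-homomorphism from $B\colonequals A[X;*]^{\fl}$ onto $\Cay(A,\mu)$, show that its kernel is exactly the ideal $\langle X^2-\mu\rangle$, and then invoke the first isomorphism theorem (noting that, as remarked before the statement, the quotient $B/\langle X^2-\mu\rangle$ naturally carries the induced involution, so the isomorphism we obtain will automatically be one of $*$-algebras). The natural candidate map is $\psi\colon B\to\Cay(A,\mu)$ defined on basis elements by $\psi(aX^m)=(a,0)$ when $m$ is even and $\psi(aX^m)=(0,a)$ when $m$ is odd, extended $K$-linearly; intuitively this sends $X$ to the element $(0,1)$ of the Cayley double and uses $X^2\mapsto(0,1)(0,1)=(\mu,0)=\mu$. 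One should first verify directly that $\psi$ is surjective (immediate, since $(a,b)=\psi(a+bX)$) and that it is a $K$-linear map respecting the involution $\alpha$: for $m$ even $\alpha(aX^m)=*^{m+1}(a)X^m=a^*X^m$ maps to $(a^*,0)=(a,0)^*$, while for $m$ odd $\alpha(aX^m)=-*^{m+1}(a)X^m=-aX^m$ maps to $(0,-a)=(0,a)^*$, matching the involution $(a,b)^*=(a^*,-b)$ on $\Cay(A,\mu)$.

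The key computational step is checking that $\psi$ is multiplicative, i.e. $\psi\big((aX^m)(bX^n)\big)=\psi(aX^m)\psi(bX^n)$ for all $m,n\in\mathbb{N}$ and $a,b\in A$. Using the product formula \eqref{eq:star-skew-product}, the left-hand side is $\psi\big(\tau_n(a,*^m(b))X^{m+n}\big)$, whose value lands in the first or second summand of $\Cay(A,\mu)$ according to the parity of $m+n$. The plan is to split into the four parity cases for $(m,n)$ and compare against the four products $(a,0)(b,0)=(ab,0)$, $(a,0)(0,b)=(0,ba^*)$ wait—more carefully, using $(a,b)(c,d)=(ac+\mu d^*b,\,da+bc^*)$, the relevant special products are $(a,0)(b,0)=(ab,0)$, $(a,0)(0,b)=(0,ba)$, $(0,a)(b,0)=(0,ab^*)$, and $(0,a)(0,b)=(\mu b^*a,0)$. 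One then matches each against $\psi(\tau_n(a,*^m(b))X^{m+n})$ after reducing $*^m$ by the parity of $m$ and $\tau_n$ by the parity of $n$; the only nontrivial identity needed is that the odd-odd case produces the factor $\mu$, which is exactly where the relation $X^2=\mu$ enters and where the $\mu b^* b$-type term of the Cayley product is reproduced.

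Finally, I would compute the kernel. Since $\psi$ is already multiplicative and $\psi(X^2-\mu)=(\mu,0)-(\mu,0)=0$, we have $\langle X^2-\mu\rangle\subseteq\ker\psi$; for the reverse inclusion, the grading-type observation from \autoref{remark: graded ring} (here $\delta=0$, so the hypothesis $\sigma\circ\delta+\delta\circ\sigma=0$ holds trivially and $B=B_0\oplus B_0X$) lets me reduce any $p\in B$ modulo $\langle X^2-\mu\rangle$ to a representative of the form $a+bX$ with $a,b\in A$, and $\psi(a+bX)=(a,b)=0$ forces $a=b=0$, whence $p\in\langle X^2-\mu\rangle$. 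Thus $\ker\psi=\langle X^2-\mu\rangle$ and the induced map $\bar\psi\colon B/\langle X^2-\mu\rangle\to\Cay(A,\mu)$ is the desired $*$-isomorphism. I expect the main obstacle to be the bookkeeping in the multiplicativity check: one must track both the $*^m$ applied to $b$ and the flip $\tau_n$ simultaneously across all four parity combinations, and confirm that the flips precisely account for the asymmetry in the Cayley product (the $a^*$, $b^*$, and $\mu d^*b$ terms), since a sign or order error there is easy to make and is exactly the point at which the ``flipped'' structure is doing its work.
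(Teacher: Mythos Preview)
Your overall strategy is sound and close in spirit to the paper's (which goes the other direction: it first shows every class in $B/I$ has a representative $a+bX$ and then defines the map $[a+bX]\mapsto(a,b)$ directly on the quotient). However, there is a genuine error in your definition of $\psi$. As written, $\psi(aX^m)=(a,0)$ for \emph{every} even $m$, so in particular $\psi(X^2)=\psi(1\cdot X^2)=(1,0)$, not $(\mu,0)$. You yourself write $\psi(X^2-\mu)=(\mu,0)-(\mu,0)=0$ a few lines later, which contradicts your own definition. With your $\psi$, multiplicativity already fails at the simplest level: $\psi(X)\psi(X)=(0,1)(0,1)=(\mu,0)\neq(1,0)=\psi(X^2)$. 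More generally, in the odd--odd parity case you get $(0,a)(0,b)=(\mu b^*a,0)$ on the Cayley side but only $\psi(b^*aX^{m+n})=(b^*a,0)$ on the polynomial side, so the ``factor $\mu$'' you mention never actually materializes.

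The fix is straightforward: define $\psi(aX^{2k})=(\mu^k a,0)$ and $\psi(aX^{2k+1})=(0,\mu^k a)$, i.e.\ insert a factor of $\mu^{\lfloor m/2\rfloor}$. With this correction your four-case multiplicativity check goes through exactly as you outline (for $m=2j+1$, $n=2k+1$ both sides now give $(\mu^{j+k+1}b^*a,0)$), the involution check is unchanged since $\mu\in K$ is $*$-fixed, and $\psi(X^2-\mu)=(\mu,0)-(\mu,0)=0$ now genuinely holds. The remainder of your argument---surjectivity, and reducing modulo $\langle X^2-\mu\rangle$ to a representative $a+bX$ to obtain the reverse kernel inclusion---is fine.
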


\begin{proof}
Define $B\colonequals A[X;*]^{\fl}$ and $I\colonequals\langle X^2-\mu\rangle$, and let $n\in\mathbb{N}$ and $a\in A$ be arbitrary. Since $aX^n$ is the product of $a$ with $X^n$, we have $[aX^n]=[a][X^n]$. Hence, by using the power associativity of $X$, 
\begin{align*}
[aX^{2n}]&=[a][X^{2n}]=[a][(X^2)^n]=[a][X^2]^n=[a][\mu]^n=[a\mu^n],\\
[aX^{2n+1}]&=[a][X^{2n+1}]=[a][X(X^2)^n]=[a]([X][X^2]^n)=[a]([X][\mu^n])=[a\mu^nX]. 
\end{align*}
Since $B$ is a free left $A$-module with basis $\{1,X,X^2,\ldots\}$, the above computations imply that any element in $B/I$ may be written as $[a'+b'X]$ for some $a',b'\in A$. Now, for arbitrary $a,b,c,d\in A$, 
\begin{align}\label{eq:cayley-prod}
[a+bX][c+dX]&=[ac+a(dX)+(bX)c+(bX)(dX)]\\
&=[ac+\tau_1(a,d)X+\tau_0(b,c^*)X+\tau_1(b,d^*)X^2]\nonumber\\
&=[ab+(da)X+(bc^*)X+(d^*b)X^2]=[ab+\mu d^*b+(da+bc^*)X].\nonumber
\end{align}
Define a map $\phi\colon B/I\to \Cay(A,\mu)$ by $[a+bX]\mapsto (a,b)$ and extend it $K$-linearly. Then $\phi$ is clearly surjective. Moreover, by comparing degrees, $[a+bX]=0\iff a+bX\in I\iff a=b=0$. Hence $\ker \phi=0$, so $\phi$ is injective. We  then have 
\begin{align*}
\phi([a+bX][c+dX])&\stackrel{\eqref{eq:cayley-prod}}{=}\phi([ab+\mu d^*b+(da+bc^*)X])=(ab+\mu d^*b, da+bc^*)\\
&\stackrel{\phantom{\eqref{eq:cayley-prod}}}{=}(a,b)(c,d)=\phi([a+bX])\phi([c+dX]),
\end{align*}
so $\phi$ is an isomorphism of $K$-algebras. Last, we show that $\phi$ is compatible with the involutions of $B/I$ and $\Cay(A,\mu)$: $\phi([a+bX]^*)=\phi([(a+bX)^*])=\phi([a^*-bX])=(a^*,-b)=\phi([a+bX])^*$.
\end{proof}

\begin{example}\label{ex:classical-Cayley-Dickson-quotient}This example follows immediately from \autoref{ex:classical-Cayley-Dickson} and \autoref{thm:cayley-dickson-isomorphism}.
\begin{align*}
\mathbb{C}&\cong\mathbb{R}[X]/\langle X^2+1\rangle,& \mathbb{C}'&\cong \mathbb{R}[X]/\langle X^2-1\rangle,\\
\mathbb{H}&\cong\mathbb{C}[X;*]/\langle X^2+1\rangle,&\mathbb{H}'&\cong\mathbb{C}'[X;*]/\langle X^2-1\rangle,\\
\mathbb{O}&\cong\mathbb{H}[X;*]^{\fl}/\langle X^2+1\rangle, &\mathbb{O}'&\cong\mathbb{H}'[X;*]^{\fl}/\langle X^2-1\rangle ,\\
&\phantom{\vdots\vdots} \vdots && \phantom{\vdots\vdots} \vdots 
\end{align*}
\end{example}

Let $A$ be a non-associative $*$-algebra over an associative and commutative ring $K$. Consider the generalized polynomial ring $A[t]=A[t;\id_A,0]$, which is naturally an algebra over the associative and commutative ring $K[t]$. Extending $*$ to $A[t]$ using $t^*=t$, we consider $A[t]$ as a non-associative $*$-algebra over $K[t]$. Since the element $t\in K[t]$ is a cancellable scalar, we can form the Cayley double $\Cay(A[t],t)$. 

\begin{theorem}\label{thm:cayley-dickson-isomorphism1}
$\Cay(A[t],t)$ and  $A[X;*]^{\fl}$ are isomorphic as $*$-algebras.
\end{theorem}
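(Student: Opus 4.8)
The plan is to construct an explicit $*$-isomorphism $\Phi\colon \Cay(A[t],t)\to A[X;*]^{\fl}$ by identifying the first slot of the Cayley double with the ``even'' part of $A[X;*]^{\fl}$ and its second slot with the ``odd'' part. Write $B\colonequals A[X;*]^{\fl}$ and let $\iota\colon A[t]\to B$ be the additive map determined by $\iota\big(\sum_k a_k t^k\big)=\sum_k a_k X^{2k}$. Since here $\delta=0$ and $\sigma=*$ satisfies $\sigma\circ\delta+\delta\circ\sigma=0$ with $\sigma^2=\id_A$, \autoref{remark: graded ring} shows that $\iota$ is an injective $K$-algebra homomorphism onto the subring $S_0$ of even-degree elements and that $B=S_0\oplus S_0X$ as $K$-modules. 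Right multiplication by $X$ sends $S_0$ bijectively onto the odd part $S_0X$ (explicitly $\iota\big(\sum_k b_kt^k\big)X=\sum_k b_kX^{2k+1}$ by \eqref{eq:star-skew-product}), so I would set
\[
\Phi(p,q)\colonequals \iota(p)+\iota(q)X
\]
and read off immediately that $\Phi$ is a $K$-linear bijection (restricting scalars from $K[t]$ to $K$ on the domain).

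The heart of the argument is multiplicativity, and the point to handle carefully is that $B$ is non-associative: $\Phi(p,q)\Phi(r,s)$ must be expanded by biadditivity into the four bracketed products $\iota(p)\iota(r)$, $\iota(p)\big(\iota(s)X\big)$, $\big(\iota(q)X\big)\iota(r)$, and $\big(\iota(q)X\big)\big(\iota(s)X\big)$, each of which I would evaluate directly on monomials via \eqref{eq:star-skew-product} rather than reassociating. Writing $p=\sum_k a_kt^k$, etc., and using that $t^*=t$ forces $p^*=\sum_k a_k^*t^k$, the four products evaluate to $\iota(pr)$, $\iota(sp)X$, $\iota(qr^*)X$, and $\iota(ts^*q)$ respectively; the flips and the powers of $*$ are exactly accounted for by the parity of $n$ in $\tau_n$ and of $m$ in $*^m$ (an odd right-hand degree flips the factors, an odd left-hand degree applies $*$). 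Collecting the even-degree terms gives $\iota(pr+ts^*q)$ and the odd-degree terms give $\iota(sp+qr^*)X$, which is precisely $\Phi$ applied to the Cayley product $(p,q)(r,s)=(pr+ts^*q,\,sp+qr^*)$. This is the step I expect to be the main obstacle, since it is where the asymmetric terms $ts^*q$ and $qr^*$ of the Cayley double must be produced correctly, and it is the only place where non-associativity could trip up a naive computation.

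Finally I would check compatibility with the involutions, where $B$ carries the involution $\alpha$ of \autoref{prop: extendion of star}. A direct application of the defining formula for $\alpha$ gives $\alpha(\iota(p))=\iota(p^*)$ on even-degree elements (the sign $(-1)^{2k}$ is trivial and $*^{2k+1}=*$) and $\alpha(\iota(q)X)=-\iota(q)X$ on odd-degree elements (the sign $(-1)^{2k+1}$ is $-1$ and $*^{2k+2}=\id$). Hence $\alpha(\Phi(p,q))=\iota(p^*)-\iota(q)X=\Phi(p^*,-q)=\Phi((p,q)^*)$, so $\Phi$ is a $*$-isomorphism and the proof is complete.
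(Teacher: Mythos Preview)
Your proof is correct and follows the same approach as the paper: your map $\Phi(p,q)=\iota(p)+\iota(q)X$ is exactly the paper's $\phi(p(t),q(t))=p(X^2)+q(X^2)X$, and both proofs appeal to \autoref{remark: graded ring} for the algebra structure and then verify $\alpha$-compatibility directly. The paper merely asserts multiplicativity as ``straightforward,'' whereas you actually carry out the four-term monomial check; your explicit verification that the cross-terms yield $\iota(ts^*q)$ and $\iota(qr^*)$ is accurate and fills in what the paper leaves to the reader.
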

\begin{proof}
    We define a map $\phi:\Cay(A[t],t)\to A[X;*]^{\fl}$ as follows: \[\phi(p(t),q(t))=p(X^2)+q(X^2)X,\quad\text{for any } p(t),q(t)\in A[t].\] In the light of \autoref{remark: graded ring}, it is straightforward to check that $\phi$ is an isomorphism of $K$-algebras. We also see that $\phi$ is compatible with the involutions of $\Cay(A[t],t)$ and $A[X;*]^{\fl}$: $\phi((p(t),q(t))^*)=\phi(p(t)^*,-q(t))=p(X^2)^*-q(X^2)X=\phi(p(t),q(t))^*$.
\end{proof}

Recall that $A$ is called \emph{flexible} if $(a,b,a)=0$, and \emph{alternative} if $(a,a,b)=(b,a,a)=0$ for all $a,b\in A$. The criteria for $A[X;*]^{\fl}$ to inherit algebraic properties like the above from $A$ are given by the following two theorems (compare with \cite[Theorem 6.8]{McC85}):

\begin{theorem}\label{thm:Cayley-Dickson-properties}
If $B=A[X;*]^{\fl}$, then the following assertions hold:
\begin{enumerate}[label=\upshape(\roman*)]
\item The involution $\alpha$ on $B$ is trivial if and only if the involution $*$ on $A$ is trivial and $2A=0$;\label{it:cayley-dickson-properties1}
\item $B$ is commutative if and only if $A$ is commutative with trivial involution;\label{it:cayley-dickson-properties2}
\item $B$ is associative if and only if $A$ is associative and commutative;\label{it:cayley-dickson-properties3}
\item $B$ is flexible if and only if $A$ is flexible, $aa^*$ commutes with $A$, and $(a,b,c)=(a,b^*,c^*)$ for all $a,b,c\in A$; \label{it:cayley-dickson-properties4}
\item $B$ is alternative if and only if $A$ is alternative, $aa^*$ commutes with $A$, and $2a+a^*$ lies in the nucleus of $A$ for all $a\in A$. \label{it:cayley-dickson-properties5}
\end{enumerate}	
\end{theorem}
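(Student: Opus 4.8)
The plan is to deduce all five statements from the corresponding classical results of McCrimmon on Cayley doubles, using the $*$-isomorphism $B = A[X;*]^{\fl} \cong \Cay(A[t],t)$ established in \autoref{thm:cayley-dickson-isomorphism1}. Because $\phi$ is an isomorphism of $*$-algebras, each of the five properties of $B$ in question — triviality of the involution $\alpha$, commutativity, associativity, flexibility, and alternativity — holds if and only if the same property holds for the Cayley double $\Cay(A[t],t)$ of the $*$-algebra $A[t]$ over $K[t]$. Since $t\in K[t]$ is a cancellable scalar acting faithfully on $A[t]$, \cite[Theorem 6.8]{McC85} applies to $\Cay(A[t],t)$ and characterizes each property in terms of conditions on $A[t]$ (for instance, flexibility of the double amounts to $A[t]$ being flexible together with $qq^*$ commuting with $A[t]$ and $(p,q,r)=(p,q^*,r^*)$; alternativity amounts to $A[t]$ being alternative together with $qq^*$ commuting with $A[t]$ and $2q+q^*\in N(A[t])$). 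It then remains to translate each such condition on $A[t]$ into the stated condition on $A$.

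This translation is where the real work lies, and it rests on the fact that $A[t]=A[t;\id_A,0]$ is the ordinary non-associative polynomial ring, in which $t$ and every scalar of $K[t]$ is nuclear and central. Concretely, for $p=\sum_i a_it^i$, $q=\sum_j b_jt^j$, $r=\sum_k c_kt^k$ one has $(p,q,r)=\sum_{i,j,k}(a_i,b_j,c_k)t^{i+j+k}$ and $[p,q]=\sum_{i,j}[a_i,b_j]t^{i+j}$, with $p^*=\sum_i a_i^*t^i$ since $t^*=t$. Consequently, any multilinear identity (associativity, commutativity, flexibility, alternativity, or $(p,q,r)=(p,q^*,r^*)$) holds throughout $A[t]$ if and only if it holds on all coefficients, i.e.\ in $A$; and an element $\sum_i b_it^i$ lies in a one-sided nucleus or in the commuter of $A[t]$ if and only if each $b_i$ lies in the corresponding subset of $A$. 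Applying this coefficient-comparison principle to constant and to general polynomials settles \ref{it:cayley-dickson-properties2} and \ref{it:cayley-dickson-properties3} at once, and reduces \ref{it:cayley-dickson-properties1} to the remarks that $*$ is trivial on $A[t]$ exactly when it is trivial on $A$, and $2A[t]=0$ exactly when $2A=0$. For the nucleus clause of \ref{it:cayley-dickson-properties5}, the same principle gives $2p+p^*\in N(A[t])$ for all $p$ iff $2a+a^*\in N(A)$ for all $a$.

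The one genuinely nontrivial point, arising in both \ref{it:cayley-dickson-properties4} and \ref{it:cayley-dickson-properties5}, is the condition that $qq^*$ commute with $A[t]$. Writing $q=\sum_i a_it^i$ gives $qq^*=\sum_k\big(\sum_{i+j=k}a_ia_j^*\big)t^k$, and since $t$ is central this commutes with $A[t]$ if and only if each coefficient $\sum_{i+j=k}a_ia_j^*$ commutes with $A$. Taking $q$ constant yields ``$aa^*$ commutes with $A$'' as a necessary condition; conversely, a polarization argument — replacing $a$ by $a_i+a_j$ in the identity ``$aa^*$ commutes with $A$'' and subtracting the diagonal terms, which requires no division — shows that each symmetric sum $a_ia_j^*+a_ja_i^*$, and hence each coefficient $\sum_{i+j=k}a_ia_j^*$, commutes with $A$. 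Thus ``$qq^*$ commutes with $A[t]$ for all $q$'' is equivalent to ``$aa^*$ commutes with $A$ for all $a$'', which completes the translation of \ref{it:cayley-dickson-properties4} and \ref{it:cayley-dickson-properties5}.

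I expect the main obstacle to be precisely this last equivalence, together with confirming that McCrimmon's conditions, stated for a general cancellable scalar, specialize at $\mu=t$ without leaving behind any spurious $t$-dependent constraint; the multilinear coefficient-comparison principle for $A[t]$ is exactly the tool that dissolves such constraints. As a self-contained alternative, one could bypass \cite{McC85} entirely and verify each of the five equivalences directly from the product \eqref{eq:star-skew-product} and the explicit formula for $\alpha$, by computing the relevant commutators and associators of monomials $aX^m$, $bX^n$, $cX^p$ and comparing coefficients; this route is more computational but uses only material already available in the excerpt.
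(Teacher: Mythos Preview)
Your proposal is correct and coincides with the route the paper takes for \ref{it:cayley-dickson-properties4} and \ref{it:cayley-dickson-properties5}, and with the alternative proof the paper explicitly mentions for \ref{it:cayley-dickson-properties1}--\ref{it:cayley-dickson-properties3} (where the paper's primary arguments are instead short direct computations using \autoref{cor:flip-is-non-associative-ore} and \autoref{prop:associativity-equivalence}). In fact you supply more than the paper does at the one delicate point: where the paper writes ``a moment's reflection shows'' that the $qq^*$-commutation condition descends from $A[t]$ to $A$, you actually carry out the polarization argument, which is the right justification.
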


\begin{proof}
\ref{it:cayley-dickson-properties1}: Assume that $B$ has trivial involution. Since the involution of $A$ is the involution of $B$ restricted to $A$, it is also trivial. Also, for any $a\in A$, $(aX)^*=X^*a^*=-Xa^*=-aX$, so $(aX)^*=aX\iff(2a)X=0\iff 2a=0$. Now, assume instead that $A$ has trivial involution, $2A=0$, and let $p\in B$. Then $p=a_0+a_1X+a_2X^2+\cdots$ for some $a_0,a_1,a_2,\ldots\in A$ and $p^*=a_0^*-a_1X+a_2^*X+\cdots=a_0-a_1X+a_2X^2+\cdots$. Since $a_{2i+1}=-a_{2i+1}$ for any $i\in\mathbb{N}$, $p=p^*$.

For a different proof, one can use \autoref{thm:cayley-dickson-isomorphism1} together with \cite[Theorem 6.8 (i)]{McC85}.
\\

\noindent\ref{it:cayley-dickson-properties2}: From \autoref{cor:flip-is-non-associative-ore}, it is immediate that $B$ is commutative if and only if $B=A[X;\id_A]^{\fl}$ and $A$ is commutative. Alternatively, one can use \autoref{thm:cayley-dickson-isomorphism1} together with \cite[Theorem 6.8 (ii)]{McC85}.\\

\noindent\ref{it:cayley-dickson-properties3}: This follows immediately from \ref{it:assoc-cond1} and \ref{it:assoc-cond3} in \autoref{prop:associativity-equivalence}. For an alternative proof, one can again use \autoref{thm:cayley-dickson-isomorphism1} together with \cite[Theorem 6.8 (iii)]{McC85}.\\

\noindent\ref{it:cayley-dickson-properties4}: By \autoref{thm:cayley-dickson-isomorphism1}, $B$ is flexible if and only if $\Cay(A[t],t)$ is flexible, which by \cite[Theorem 6.8 (iv)]{McC85} is equivalent to the conditions that $A[t]$ is flexible, $p(t)p(t)^*$ commutes with $A[t]$, and $(p(t),q(t),r(t))=(p(t),q(t)^*,r(t)^*)$ for all $p(t),q(t),r(t)\in A[t]$. Since $t\in$ $Z(A[t])$ and $t^*=t$, a moment's reflection shows that these conditions hold over $A[t]$ if and only if they hold over $A$.\\ 

\noindent\ref{it:cayley-dickson-properties5}: By \autoref{thm:cayley-dickson-isomorphism1}, $B$ is alternative if and only if $\Cay(A[t],t)$ is alternative, which by \cite[Theorem 6.8 (v)]{McC85} is equivalent to the conditions that $A[t]$ is alternative, $p(t)p(t)^*$ commutes with $A[t]$, and $2p(t)+p(t)^*\in N(A[t])$ for all $p(t)\in A[t]$. Since $t\in Z(A[t])$ and $t^*=t$, it is easy to see that these conditions hold over $A[t]$ if and only if they hold over $A$.
\end{proof}

We define $C_*(A)$ as $\{a\in C(A)\colon a^*=a\}$ and $Z_*(A)$ as $\{a\in Z(A)\colon a^*=a\}$. With these notations, we have the following theorem:

\begin{theorem}\label{thm:cayley-dickson-centers}
If $B=A[X;*]^{\fl}$, then the following equalities hold:
\begin{enumerate}[label=\upshape(\roman*)]
\item $\begin{aligned}[t]C(B)=\ &\big\{\textstyle\sum_{i\in\mathbb{N}}a_iX^i\colon a_i\in C_*(A), a_{2i+1}b^*=a_{2i+1}b\ \forall b\in A, i\in\mathbb{N}\big\};\end{aligned}$\label{it:Cayley-commuter}
\item $\begin{aligned}[t]N_l(B)=\ &N_r(B)=\big\{\textstyle\sum_{i\in\mathbb{N}}a_iX^i\colon a_{2i}\in Z(A), a_{2i+1}\in C(A)\cap N_m(A),\\
&(a_{2i+1}b)c = a_{2i+1}(cb), (bc)a_{2i+1}=c(ba_{2i+1})\ \forall b,c\in A, i\in\mathbb{N}\big\};\end{aligned}$\label{it:Cayley-left-right-nucleus}
\item $\begin{aligned}[t]N_m(B)=\ &\big\{\textstyle\sum_{i\in\mathbb{N}}a_iX^i\colon a_{2i}\in C(A)\cap N_m(A),a_{2i+1}\in C(A),\\
&(a_{2i+1}b)c =(a_{2i+1}c)b, b(ca_{2i+1})=c(ba_{2i+1})\ \forall b,c\in A, i\in\mathbb{N}\big\};\end{aligned}$\label{it:Cayley-middle-nucleus}
\item $\begin{aligned}[t]N(B)=\ &\big\{\textstyle\sum_{i\in\mathbb{N}}a_iX^i\colon a_{i}\in Z(A), a_{2i+1}[A,A]=0\ \forall i\in\mathbb{N}\big \};\end{aligned}$\label{it:Cayley-nucleus}
\item $\begin{aligned}[t]Z(B)=\ &\big\{\textstyle\sum_{i\in\mathbb{N}}a_iX^i\colon a_i\in Z_*(A), a_{2i+1}b^*=a_{2i+1}b\ \forall b\in A, i\in\mathbb{N}\big\}.
\end{aligned}$\label{it:Cayley-center}
\end{enumerate}
\end{theorem}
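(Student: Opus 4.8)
The plan is to exploit the $\mathbb{N}$-grading of $B$ by powers of $X$. Since $B$ is a free left $A$-module on $\{1,X,X^2,\dots\}$, each of the defining conditions for $C(B),N_l(B),N_m(B),N_r(B)$ need only be tested against basis monomials $bX^j,cX^k$; and because the product \eqref{eq:star-skew-product} sends a triple of monomials of degrees $e,j,k$ to a single monomial of degree $e+j+k$, the membership conditions decouple completely across the degree $e$ of the coefficient being tested. So for each family I would fix a single monomial $aX^e$ and determine exactly when the relevant commutator or associator with arbitrary monomials vanishes; this then splits according to the parity of $e$ (which governs $\tau_e$ and $\ast^{e}$) and the parities of $j,k$. (One could instead route everything through \autoref{thm:cayley-dickson-isomorphism1} and McCrimmon's formulas for a Cayley double, but the direct computation is more transparent.)

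First I would dispatch the commuter. Writing $[\,aX^e,bX^j\,]=0$ via \eqref{eq:star-skew-product} and comparing coefficients gives, for each parity of $e$ and $j$, a single identity in $A$; the four cases combine—using $b=1$ and then replacing $b$ by $b^\ast$—to force $a_i\in C_\ast(A)$ for every coefficient, with the surviving constraint $a_{2i+1}b^\ast=a_{2i+1}b$ on odd coefficients, which is \ref{it:Cayley-commuter}. The nuclei are handled the same way but with the triadditive associator of three monomials, yielding eight parity cases per nucleus. For the left nucleus I would show that for even $e$ the raw identities collapse—after extracting commutativity by setting one variable to $1$, then feeding centrality back in—successively to $a\in N_l(A)$, $a\in N_r(A)$ and $a\in N_m(A)$, i.e. $a_{2i}\in Z(A)$; for odd $e$ they collapse to $a\in C(A)\cap N_m(A)$ together with the two displayed identities of \ref{it:Cayley-left-right-nucleus}. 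The middle nucleus is computed identically, giving \ref{it:Cayley-middle-nucleus}; the subtlety there is that several cases produce conditions phrased through $a^\ast$, which one rewrites using that $C(A)$ and $N_m(A)$ are $\ast$-stable.

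For the right nucleus I would avoid a second long computation by invoking the involution $\alpha$ of \autoref{prop: extendion of star}: since $\alpha$ is an anti-automorphism, $\alpha\big((p,q,r)\big)=-\big(\alpha(r),\alpha(q),\alpha(p)\big)$, so $\alpha\big(N_l(B)\big)=N_r(B)$. As $\alpha$ acts on coefficients by $a_{2i}\mapsto a_{2i}^\ast$ and $a_{2i+1}\mapsto-a_{2i+1}$, and $Z(A),C(A),N_m(A)$ are $\ast$-stable while the two displayed identities are preserved under these coefficient maps, the set in \ref{it:Cayley-left-right-nucleus} is $\alpha$-invariant; hence $N_r(B)=\alpha\big(N_l(B)\big)=N_l(B)$. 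Finally $N(B)=N_l(B)\cap N_m(B)$ and $Z(B)=C(B)\cap N(B)$ are obtained by intersecting the sets already found. On even coefficients these intersections immediately give $Z(A)$ and $Z_\ast(A)$ respectively; on odd coefficients one must check that the leftover commutator conditions simplify as claimed—specifically that, once $a\in C(A)\cap N_m(A)\cap N_r(A)$, the identity $(a,b,c)=-a[b,c]$ (valid as soon as $(ab)c=a(cb)$ holds) shows $a\in N_l(A)\iff a[A,A]=0$, yielding \ref{it:Cayley-nucleus}, and that for \ref{it:Cayley-center} the condition $a[A,A]=0$ is in fact redundant given $a\in Z_\ast(A)$ and $ab^\ast=ab$ (split $a(bc)$ and $a(cb)$ through $N_l(A)$ and use $au=au^\ast$).

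The main obstacle I anticipate is not any single identity but the bookkeeping of the eight parity cases per nucleus together with the reduction of each raw list to the minimal stated conditions. The genuinely non-formal steps are the two redundancy arguments just mentioned—deriving $a\in N_l(A)$ from $a[A,A]=0$ for \ref{it:Cayley-nucleus}, and eliminating $a[A,A]=0$ for \ref{it:Cayley-center}—since these require combining nuclearity inside $A$ with the involution condition, rather than mere coefficient comparison.
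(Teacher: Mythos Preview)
Your plan is correct and essentially complete; the redundancy checks you isolate for \ref{it:Cayley-nucleus} and \ref{it:Cayley-center} do go through exactly as you outline (for the latter, chaining $a(bc)=a(b^{*}c)$ via $N_l(A)$ and $ab=ab^{*}$, then $a(b^{*}c)=a(cb)$ via the $au=au^{*}$ trick applied once more, yields $a[A,A]=0$).

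Your route differs from the paper's in two respects. For the necessity direction of \ref{it:Cayley-left-right-nucleus}--\ref{it:Cayley-middle-nucleus}, the paper does not carry out the eight parity cases directly; instead it transports the problem to a Cayley double via \autoref{thm:cayley-dickson-isomorphism} (using $X^{2i}\in Z(B)$ and $X^{2i+1}=X\cdot X^{2i}$) and then invokes McCrimmon's computations for $\Cay(A,\mu)$ verbatim. Your direct monomial computation is more self-contained and avoids the external reference, at the cost of the bookkeeping you anticipate. Second, your use of the involution $\alpha$ to obtain $N_r(B)=\alpha(N_l(B))=N_l(B)$ is a genuine shortcut absent from the paper, which simply asserts that the right-nucleus argument is ``by similar methods'' to the left one; your observation that $Z(A)$, $C(A)$, $N_m(A)$ are $*$-stable and that the two displayed identities in \ref{it:Cayley-left-right-nucleus} are interchanged under $*$ is exactly what is needed to close that argument. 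Finally, the paper treats \ref{it:Cayley-nucleus}--\ref{it:Cayley-center} as immediate from intersecting \ref{it:Cayley-commuter}--\ref{it:Cayley-middle-nucleus}, without spelling out the two redundancy steps you flag; you are right that these are the only places where something beyond bookkeeping is required.
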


\begin{proof}
\ref{it:Cayley-commuter}: We note that $\sum_{i\in\mathbb{N}}a_iX^i\in C(B)$, where $a_i\in A$ is arbitrary, if and only if $a_iX^i\in C(B)$ for any $i\in\mathbb{N}$. Hence, to show that the conditions are necessary, assume that $a_iX^i\in C(B)$ and let $b\in A$ be arbitrary. Then $[a_{2i}X^{2i},b]=[a_{2i+1}X^{2i+1},b]=0$, and by \eqref{eq:star-skew-product}, $[a_{2i}X^{2i},b]=(a_{2i}b-ba_{2i})X^{2i}$ and $[a_{2i+1}X^{2i+1},b]=(a_{2i+1}b^*-a_{2i+1}b)X^{2i+1}$. By comparing coefficients, $a_{2i}\in C(A)$ and $a_{2i+1}b^*=a_{2i+1}b$ for any $i\in\mathbb{N}$. Similarly, $[a_{2i}X^{2i},bX]=[a_{2i+1}X^{2i+1},b^*X]=0$, and by \eqref{eq:star-skew-product}, $[a_{2i}X^{2i},bX]=(a_{2i}b-ba_{2i}^*)X^{2i+1}$ and $[a_{2i+1}X^{2i+1},b^*X]=(ba_{2i+1}-a_{2i+1}^*b^*)X^{2i+2}$. By letting $b=1$, we thus have $a_{2i}^*=a_{2i}$ and $a_{2i+1}^*=a_{2i+1}$. By then comparing coefficients in the original expression, $ba_{2i+1}=a^*_{2i+1}b^*=a_{2i+1}b^*$, and since $a_{2i+1}b^*=a_{2i+1}b$, we have $a_{2i+1}\in C(A)$. To show that the conditions are sufficient, it suffices to show that $[a_iX^i,bX^j]=0$ for any $i,j\in\mathbb{N}$. From \eqref{eq:star-skew-product}, $[a_iX^i,bX^j]=(\tau_j(a_i,*^i(b))-\tau_i(b,*^j(a_i)))X^{i+j}=(a_i*^i(b)-\tau_i(b,a_i))X^{i+j}=(a_i*^i(b)-a_ib)X^{i+j}=(a_ib-a_ib)X^{i+j}=0$. 

For a different proof, one can use \autoref{thm:cayley-dickson-isomorphism1} together with \cite[Theorem 6.8 (viii)]{McC85}.\\

\noindent\ref{it:Cayley-left-right-nucleus}--\ref{it:Cayley-middle-nucleus}: We note that $\sum_{i\in\mathbb{N}}a_iX^i\in N_x(B)$ where $a_i\in A$ and $x$ is any of $l,m,$ and $r$, if and only if $a_iX^i\in N_x(B)$ for any $i\in\mathbb{N}$. The necessary conditions in \ref{it:Cayley-left-right-nucleus}--\ref{it:Cayley-middle-nucleus} now follow by replacing $a$ and $bl$ in the proof of \cite[Theorem 6.8 (ix)--(x)]{McC85} by $a_{2i}X^{2i}$ and $a_{2i+1}X^{2i+1}$, respectively, and using that $X^{2i}\in Z(B)$ and $X^{2i+1}=XX^{2i}$ together with \autoref{thm:cayley-dickson-isomorphism}. (The proof regarding the right nucleus in \cite{McC85} has been omitted, however; it can be obtained by similar methods to that of the left nucleus.) To show that the conditions for $N_l(B)$ are sufficient, it suffices to show that $(a_iX^i,bX^j,cX^k)=0$ for any $b,c\in A$ and $i,j,k\in\mathbb{N}$. By \eqref{eq:star-skew-product}, $(a_iX^i,bX^j,cX^k)=(\tau_k(\tau_j(a_i,*^i(b)),*^{i+j}(c))-\tau_{j+k}(a_i,*^i(\tau_k(b,*^j(c)))))X^{i+j+k}=(\tau_k(a_i*^i(b),*^{i+j}(c))-a_i*^i(\tau_k(b,*^j(c))))X^{i+j+k}$, so if $k$ is even, then
\begin{align*}
(a_iX^i,bX^j,cX^k)&=((a_i*^i(b))*^{i+j}(c)-a_i*^i(b*^j(c)))X^{i+j+k}\\
&=((a_i*^i(b))*^{i+j}(c)-(a_i*^i(b))(*^{i+j}(c)))X^{i+j+k}=0.
\end{align*}
If $k$ is odd, then
\begin{align*}
(a_iX^i,bX^j,cX^k)&=(*^{i+j}(c)(a_i*^i(b))-a_i*^i(*^j(c)b))X^{i+j+k}\\
&=(*^{i+j}(c)(a_i*^i(b))-(a_i*^{i+j}(c))*^i(b))X^{i+j+k}\\
&=(*^{i+j}(c)(a_i*^i(b))-(*^{i+j}(c)a_i)*^i(b))X^{i+j+k}\\
&=(*^{i+j}(c)(a_i*^i(b))-*^{i+j}(c)(a_i*^i(b)))X^{i+j+k}=0.
\end{align*}
The conditions for $N_m(B)$ and $N_r(B)$ can be shown sufficient by similar calculations. Moreover, similarly to the previous proofs, one can also use \autoref{thm:cayley-dickson-isomorphism1} together with \cite[Theorem 6.8 (ix)--(x)]{McC85}.
\\

\noindent\ref{it:Cayley-nucleus}--\ref{it:Cayley-center}: This follows from the equalities $N(B)=N_l(B)\cap N_m(B)\cap N_r(B)$ and $Z(B)=C(B)\cap N(B)$ together with \ref{it:Cayley-commuter}--\ref{it:Cayley-middle-nucleus} above.
\end{proof}

With $Z_*(B)\colonequals \{b\in Z(B)\colon b^*=b\}$, we have the following corollary:

\begin{corollary}\label{cor:repeated-Cayley-Dickson}
If $B=A[X;*]^{\fl}$, $1/2\in K$, and $A$ is obtained from $K$ by $n$ repeated applications of the Cayley-Dickson process, then the following equalities hold:\newpage
\begin{enumerate}[label=\upshape(\roman*)]
\item $C(B)=Z(B)=\begin{cases}K[X]&\text{if }n=0,\\
			K[X^2]&\text{if } n\geq 1.
\end{cases}$
;\label{it:Cayley-extension-center}
\item $Z_*(B)=K[X^2]$;\label{it:Cayley-extension-starcenter}
\item $N(B)=\begin{cases}B&\text{if }0\leq n \leq 1,\\
			K[X^2]&\text{if } n\geq 2.
\end{cases}$\label{it:Cayley-extension-nucleus}
\end{enumerate}
\end{corollary}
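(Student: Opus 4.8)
The plan is to reduce Corollary~\ref{cor:repeated-Cayley-Dickson} to the already-established general descriptions of the commuter, center, and nucleus of $B=A[X;*]^{\fl}$ given in \autoref{thm:cayley-dickson-centers}. Since $1/2\in K$ and $A$ is obtained from $K$ by $n$ applications of the Cayley--Dickson process, the well-known structural facts for such $A$ (which I would record as a preliminary observation) are: for $n=0$ we have $A=K$ with trivial involution; for $n=1$, $A$ is commutative and associative but $*$ is nontrivial, with $Z(A)=A$ and $Z_*(A)=K$; for $n\geq 2$, $A$ is noncommutative with $Z(A)=Z_*(A)=K$ and $C(A)=K$ as well. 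The key point throughout is that $K$ consists exactly of the $*$-fixed elements of the centre, i.e. $Z_*(A)=K$ for all $n\geq 1$, and that scalars in $K$ satisfy $b^*=b$ and commute with everything.

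First I would prove \ref{it:Cayley-extension-center}. Applying \ref{it:Cayley-center} of \autoref{thm:cayley-dickson-centers}, an element $\sum_i a_iX^i$ lies in $Z(B)$ iff each $a_i\in Z_*(A)$ and each odd coefficient $a_{2i+1}$ satisfies $a_{2i+1}b^*=a_{2i+1}b$ for all $b\in A$, equivalently $a_{2i+1}(b-b^*)=0$ for all $b$. For $n=0$ the involution on $A=K$ is trivial, so the odd condition is vacuous and $Z_*(K)=K$, giving $Z(B)=K[X]$; the same computation via \ref{it:Cayley-commuter} gives $C(B)=K[X]$. For $n\geq 1$ the involution is nontrivial, so there is a $b$ with $b\neq b^*$, and since $A$ has no zero divisors (being a real Cayley--Dickson algebra, or more generally by the hypotheses making the construction valid) the odd condition forces $a_{2i+1}=0$; combined with $Z_*(A)=K$ this yields $Z(B)=K[X^2]$, and likewise $C(B)=K[X^2]$ using $C_*(A)=K$. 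This simultaneously handles $C(B)$ and $Z(B)$ and establishes \ref{it:Cayley-extension-center}.

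Next, \ref{it:Cayley-extension-starcenter} follows by intersecting the description of $Z(B)$ with the $*$-fixed condition. For $n=0$, $Z(B)=K[X]$ but $X^*=-X$, so imposing $b^*=b$ kills the odd part and leaves $K[X^2]$; for $n\geq 1$ we already have $Z(B)=K[X^2]$, and since $(X^2)^*=X^2$ and $K$ is $*$-fixed, every element of $K[X^2]$ is already $*$-fixed, so $Z_*(B)=K[X^2]$ in all cases.

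Finally, for \ref{it:Cayley-extension-nucleus} I would use \ref{it:Cayley-nucleus}: $\sum_i a_iX^i\in N(B)$ iff each $a_i\in Z(A)$ and each odd coefficient annihilates $[A,A]$. When $0\le n\le1$, $A$ is commutative so $[A,A]=0$ and $Z(A)=A$, whence the conditions are vacuous and $N(B)=A[X]=B$. When $n\geq 2$, $A$ is noncommutative with $Z(A)=K$, so the even coefficients lie in $K$; for the odd coefficients, $a_{2i+1}\in K$ must satisfy $a_{2i+1}[A,A]=0$, and since $[A,A]\neq 0$ while $K$ acts faithfully without zero divisors, this forces $a_{2i+1}=0$, leaving $N(B)=K[X^2]$. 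The main obstacle, and the step deserving the most care, is justifying the algebraic facts about $A$ itself for each range of $n$ --- specifically that $n\geq 2$ really does give $Z(A)=C(A)=K$ with $[A,A]\neq 0$, and that the relevant no-zero-divisor property holds --- since everything else is a direct substitution into \autoref{thm:cayley-dickson-centers}; I would therefore state these facts about iterated Cayley--Dickson algebras explicitly (with a reference to \cite{McC85}) before carrying out the three substitutions.
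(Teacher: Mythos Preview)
Your overall strategy---reduce everything to \autoref{thm:cayley-dickson-centers} together with known structural facts about iterated Cayley--Dickson algebras from \cite{McC85}---is exactly the paper's approach, and parts \ref{it:Cayley-extension-starcenter} and the $0\le n\le 1$ case of \ref{it:Cayley-extension-nucleus} go through as you describe.

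There is, however, a genuine gap in your treatment of the odd coefficients in \ref{it:Cayley-extension-center} and \ref{it:Cayley-extension-nucleus}. You argue that $a_{2i+1}(b-b^*)=0$ for some $b$ with $b\neq b^*$ forces $a_{2i+1}=0$ ``since $A$ has no zero divisors.'' But $A$ need not be free of zero divisors: already the split algebras $\mathbb{C}',\mathbb{H}',\mathbb{O}',\ldots$ have zero divisors, and even in the classical chain over $\mathbb{R}$ with $\mu=-1$ the sedenions ($n=4$) do. So this step fails precisely in the generality of the corollary. The paper avoids this by exploiting that the condition $a_{2i+1}b^*=a_{2i+1}b$ holds for \emph{all} $b\in A$, not just one: writing $A=\Cay(C,\mu)$ and $b=(0,c)$, one gets $2a_{2i+1}c=0$ for every $c\in C$, and then faithfulness of the $K$-action on $C$ together with $1/2\in K$ gives $a_{2i+1}=0$. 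Similarly, for \ref{it:Cayley-extension-nucleus} with $n\ge 2$ you need $k[A,A]=0\Rightarrow k=0$ for $k\in K$; neither ``$[A,A]\neq 0$'' nor faithfulness of $K$ on $A$ is enough on its own, and the paper simply cites the proof of \cite[Corollary~6.9~(iv)]{McC85} for this implication. You correctly flag these justifications as the delicate point, but the specific ``no zero divisors'' mechanism you propose does not work; replace it with the faithfulness argument above and the McCrimmon reference.
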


\begin{proof}
\noindent\ref{it:Cayley-extension-center}: By \cite[Corollary 6.9 (i)]{McC85}, $C(A)=Z(A)$, and so by \ref{it:Cayley-commuter} and \ref{it:Cayley-center} in \autoref{thm:cayley-dickson-centers}, $C(B)=Z(B)$. Moreover, by \cite[Corollary 16.9 (ii)]{McC85}, $Z_*(A)=K$. If $n=0$, then $A=K$, and so $a^*=a$ for any $a\in A=K$. Hence $ka^*=ka$ for any $k\in K$, so by \ref{it:Cayley-center} in \autoref{thm:cayley-dickson-centers}, $Z(B)=K[X]$ if $n=0$. If $n\geq 1$, then $A=\Cay(C,\mu)$ for some $C$, so an arbitrary element $a\in A$ is of the form $(b,c)$ for some $b,c\in C$ and $(b,c)^*=(b^*,-c)$. Then $ka^*=ka$ for some $k\in K$ if and only if $kb^*=kb$ and $2kc=0$. Since $1/2\in K$, $c$ may be chosen arbitrarily, and since $K$ acts faithfully on $C$ by assumption, we have $2kc=0\implies k=0$. Hence, by \ref{it:Cayley-center} in \autoref{thm:cayley-dickson-centers}, $Z(B)=K[X^2]$ if $n\geq 1$.\\

\noindent\ref{it:Cayley-extension-starcenter}: By \ref{it:Cayley-extension-center}, $Z(B)=K[X]$ if $n=0$. If $p=k_0+k_1X+k_2X^2+\cdots\in K[X]=Z(B)$ for some $k_0,k_1,k_2\in K$, then $p^*=k_0-k_1X+k_2X^2+\cdots$. Hence $p^*=p$ if and only if $2k_{2i+1}=0$ for any $i\in\mathbb{N}$. Since $1/2\in K$, the latter equality is equivalent to $k_{2i+1}=0$ for any $i\in\mathbb{N}$. Therefore, $p\in Z_*(B)$ if and only if $p=k_0+k_2X^2+k_4X^4+\cdots$, so $Z_*(B)=K[X^2]$ if $n=0$. Moreover, by \ref{it:Cayley-extension-center}, $Z(B)=K[X^2]$ if $n\geq1$, so by a similar argument, $Z_*(B)=K[X^2]$ if $n\geq 1$.\\

\noindent\ref{it:Cayley-extension-nucleus}: If $0\leq n\leq 1$, then $A$ is associative and commutative. By \ref{it:cayley-dickson-properties3} in \autoref{thm:Cayley-Dickson-properties}, $B$ is associative, so $N(B)=B$. By \cite[Corollary 16.9 (iii)]{McC85}, $Z(A)=K$ if $n\geq 2$. Moreover, by the proof of \cite[Corollary 16.9 (iv)]{McC85}, $k[A,A]=0\implies k=0$ for any $k\in K$ if $n\geq 2$. Hence, by \ref{it:Cayley-extension-nucleus} in \autoref{thm:cayley-dickson-centers}, $N(B)=K[X^2]$ if $n\geq 2$.
\end{proof}	

The next example follows immediately from \autoref{cor:repeated-Cayley-Dickson} and \autoref{ex:classical-Cayley-Dickson}.

\begin{example}If $B=A[X;*]^{\fl}$ and $K=\mathbb{R}$, then
\begin{align*}
C(B)&=Z(B)=\begin{cases}\mathbb{R}[X]&\text{if } A=\mathbb{R},\\
 \mathbb{R}[X^2]&\text{if } A=\mathbb{C}, \mathbb{C}', \mathbb{H}, \mathbb{H}',\ldots,\end{cases}\\
 Z_*(B)&=\mathbb{R}[X^2]\text{ if } A=\mathbb{R},\mathbb{C},\mathbb{C}',\ldots,\\
 N(B)&=\begin{cases}B&\text{if } A=\mathbb{R}, \mathbb{C},\text{or } \mathbb{C}',\\
 \mathbb{R}[X^2]&\text{if } A=\mathbb{H}, \mathbb{H}',\mathbb{O}, \mathbb{O}',\ldots.\end{cases}
 \end{align*}
\end{example}

\section*{Acknowledgments}
We would like to thank the anonymous referee for comments on the manuscript which improved its presentation. We would also like to thank Johan Richter for pointing out a mistake in the formulation of an earlier version of \autoref{def:flipped-ring}.

\newpage


\begin{thebibliography}{99}
\bibitem{Alb47}
A.~A.~Albert,
\emph{Absolute valued real algebras},
Ann. Math \textbf{48}(2) (1947), pp. 495--501.

\bibitem{Alb42}
A.~A.~Albert,
\emph{Quadratic forms permitting composition},
Ann. Math \textbf{43}(1) (1942), pp. 161--177.

\bibitem{Cay45}
A.~Cayley,
\emph{On Jacobi's elliptic functions, in reply to Rev. B. Bronwin; and on quaternions},
Philos. Mag. \textbf{26}(3) (1845), pp. 208--211.

\bibitem{Dic19}
L.~E.~Dickson,
\emph{On quaternions and their generalization and the history of the eight square theorem},
Ann. Math \textbf{20}(3) (1919), pp. 155--171.

\bibitem{GW04}
K.~R.~Goodearl and R.~B.~Warfield,
\emph{An introduction to noncommutative Noetherian rings},
2nd ed., Cambridge University Press, Cambridge U.K. (2004).

\bibitem{Lam01}
T.~Y.~Lam,
\emph{A first course in noncommutative rings},
2nd ed. Springer, New York (2001).

\bibitem{MR01}
J.~C.~McConnell and J.~C.~Robson, with the cooperation of L.~W.~Small, 
\emph{Noncommutative Noetherian rings}, 
Rev. ed., Amer. Math. Soc., Providence, R.I. (2001).

\bibitem{McC04}
K.~McCrimmon,
\emph{A taste of Jordan algebras},
Springer, New York (2004).

\bibitem{McC85}
K.~McCrimmon,
\emph{Nonassociative algebras with scalar involution},
Pacific J. Math. \textbf{116}(1) (1985), pp. 85--109.

\bibitem{Nys13}
P.~Nystedt,
\emph{A combinatorial proof of associativity of Ore extensions},
Discrete Math. \textbf{313}(23) (2013), pp. 2748--2750.

\bibitem{NOR18}
P.~Nystedt, ~J. {\"O}inert, and ~J.~Richter,
\emph{Non-associative Ore extensions},
Isr. J. Math. \textbf{224}(1) (2018), pp. 263--292.	

\bibitem{Ore33}
O.~Ore,
\emph{Theory of non-commutative polynomials}, 
Ann. Math. {\bf 34}(3), 480--508 (1933).

\bibitem{Ric14}
J.~Richter,
\emph{A note on ``A combinatorial proof of associativity of Ore extensions''},
Discrete Math. \textbf{315--316} (2014), pp. 156--157.

\bibitem{UW60}
K.~Urbanik and F.~B.~Wright,
\emph{Absolute-valued algebras},
Proc. Amer. Math. Soc. \textbf{11} (1960), pp. 861--866.

\bibitem{Wri53}
F.~B.~Wright,
\emph{Absolute-valued algebras},
Proc. Nat. Acad. Sci. \textbf{39}(4) (1953), pp. 330--332.
\end{thebibliography}
\end{document}